\begin{document}

\theoremstyle{plain}
\newtheorem{proposition}[subsubsection]{Proposition}
\newtheorem{lemma}[subsubsection]{Lemma}
\newtheorem{corollary}[subsubsection]{Corollary}
\newtheorem{thm}[subsubsection]{Theorem}
\newtheorem{introthm}{Theorem}
\newtheorem*{thm*}{Theorem}
\newtheorem{conjecture}[subsubsection]{Conjecture}
\newtheorem{question}[subsubsection]{Question}

\theoremstyle{definition}
\newtheorem{definition}[subsubsection]{Definition}
\newtheorem{notation}[subsubsection]{Notation}
\newtheorem{condition}[subsubsection]{Condition}
\newtheorem{example}[subsubsection]{Example}

\theoremstyle{remark}
\newtheorem{remark}[subsubsection]{Remark}

\numberwithin{equation}{subsection}

\newcommand{\eq}[2]{\begin{equation}\label{#1}#2 \end{equation}}
\newcommand{\ml}[2]{\begin{multline}\label{#1}#2 \end{multline}}
\newcommand{\mlnl}[1]{\begin{multline*}#1 \end{multline*}}
\newcommand{\ga}[2]{\begin{gather}\label{#1}#2 \end{gather}}

\newcommand{\arir}{\ar@{^{(}->}}
\newcommand{\aril}{\ar@{_{(}->}}
\newcommand{\are}{\ar@{>>}}

\newcommand{\xr}[1] {\xrightarrow{#1}}
\newcommand{\xl}[1] {\xleftarrow{#1}}
\newcommand{\lra}{\longrightarrow}
\newcommand{\inj}{\hookrightarrow}

\newcommand{\mf}[1]{\mathfrak{#1}}
\newcommand{\mc}[1]{\mathcal{#1}}

\newcommand{\CH}{{\rm CH}}
\newcommand{\Gr}{{\rm Gr}}
\newcommand{\codim}{{\rm codim}}
\newcommand{\cd}{{\rm cd}}
\newcommand{\Spec} {{\rm Spec}}
\newcommand{\supp} {{\rm supp}}
\newcommand{\Hom} {{\rm Hom}}
\newcommand{\End} {{\rm End}}
\newcommand{\id}{{\rm id}}
\newcommand{\Aut}{{\rm Aut}}
\newcommand{\sHom}{{\rm \mathcal{H}om}}
\newcommand{\Tr}{{\rm Tr}}


\renewcommand{\P} {\mathbb{P}}
\newcommand{\Z} {\mathbb{Z}}
\newcommand{\Q} {\mathbb{Q}}
\newcommand{\C} {\mathbb{C}}
\newcommand{\F} {\mathbb{F}}

\newcommand{\OO}{\mathcal{O}}
\newcommand{\Fr}{{\rm Frob}}
\newcommand{\coker}{{\rm coker}}
\newcommand{\perf}{{\rm perf}}
\newcommand{\Ext}{{\rm Ext}}
\newcommand{\BH}{{\rm BH}}

\title{On the Beilinson-Hodge conjecture for $H^2$ and rational varieties}

\author{Andre Chatzistamatiou}
\address{Fachbereich Mathematik \\ Universit\"at Duisburg-Essen \\ 45117 Essen, Germany}
\email{a.chatzistamatiou@uni-due.de}

\thanks{This work has been supported by the SFB/TR 45 ``Periods, moduli spaces and arithmetic of algebraic varieties''}

\begin{abstract}
The Beilinson-Hodge conjecture asserts the surjectivity of the cycle map 
$$
H^n_M(X,\Q(n)) \to \Hom_{{\rm MHS}}(\Q(-n),H^{n}(X,\Q))
$$
for all integers $n\geq 1$ and every smooth complex algebraic variety $X$.
For $n=2$, we prove the conjecture if $X$ is rational. 
\end{abstract}

\maketitle

\tableofcontents

\section*{Introduction}
The Beilinson-Hodge conjecture ($\BH(X,n)$) asserts the surjectivity of the 
cycle map 
$$
\bar{c}_{n,n}:  H^n_M(X,\Q(n)) \xr{} \Hom_{{\rm MHS}}(\Q(-n),H^{n}(X,\Q))
$$
for all integers $n\geq 1$ and every smooth complex algebraic variety $X$.
Informally speaking, the conjecture means that every complex holomorphic $n$-form 
with logarithmic poles along the boundary divisor of every compactification
of $X$ and rational cohomology class comes from a meromorphic form of the
shape 
$$
\frac{1}{(2\pi i)^n} \sum_{j} m_j \cdot \frac{df_{j1}}{f_{j1}}\wedge \dots \wedge 
\frac{df_{jn}}{f_{jn}}
$$  
with $f_{jk}\in \C(X)^*$ and $m_j\in \Q$.

It is well-known that the conjecture holds for $n=1$ (see, for example, \cite[Proposition~2.12]{EV}). For $n\geq 2$, Asakura and Saito provided evidence 
for the conjecture by studying the Noether-Lefschetz locus of Beilinson-Hodge
cycles (see \cite{AS1},\cite{AS2},\cite{AS3}). By work of Arapura and Kumar, $\BH(X,n)$ is known to hold for every $n$ provided that $X$ 
is a semi-abelian variety or a product of curves \cite{AK}.

In our paper we consider only the case $n=2$, and make two observations.
First, for a smooth and projective variety $X$ the Beilinson-Hodge 
conjecture $\BH(\eta,2)$ for the generic point $\eta$ of $X$ is
equivalent to the injectivity of the cycle map 
$$
\frac{H^3_M(X,\Q(2))}{H^1_M(X,\Q(1))\cdot H^2_M(X,\Q(1))}\xr{} 
\frac{H^3_{\mc{H}}(X,\Q(2))}{H^1_{\mc{H}}(X,\Q(1))\cdot H^2_{\mc{H}}(X,\Q(1))}
$$   
to absolute Hodge cohomology (Proposition \ref{proposition-ex-seq-general}). The left hand side is called the group of 
\emph{indecomposable} cycles and has been studied
by M\"uller-Stach \cite{MS}. In general, indecomposable cycles exist, but
the image via the cycle class map is a countable group.
By $\BH(\eta,2)$ we mean the surjectivity of the cycle class map
$$
H^2_{M}(\C(X),\Q(2))\xr{} \varinjlim_{U\subset X}\Hom_{{\rm MHS}}(\Q(-2),H^{2}(U,\Q)),
$$ 
where $U$ runs over all open subsets of $X$, and $\C(X)$ denotes the function field.

The second observation is that if $X$ satisfies $H^1(X,\C)=0$ then 
$\BH(U,2)$ for all open sets $U\subset X$ is equivalent to $\BH(\eta,2)$
(Proposition \ref{proposition-reduction-to-generic-point}).
The statement makes perfect sense when $H^1(X,\C)\neq 0$, but we can prove
it only in the case $H^1(X,\C)=0$.
   
Combining the two observations we obtain the main theorem of the paper.
\begin{thm*}[cf.~Theorem \ref{main-thm}]\label{main-thm-intro} 
Let $X$ be smooth and connected. Let $\bar{X}$ be a smooth compactification of $X$.
We denote by $\CH_0(\bar{X})\otimes_{\Z}\Q$ the Chow group of zero cycles on $\bar{X}$. 
If $\deg:\CH_0(\bar{X})\otimes_{\Z}\Q \xr{} \Q$ is an isomorphism then $BH(X,2)$ holds.
\end{thm*}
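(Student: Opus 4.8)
The strategy is to combine the two reductions from the introduction with the Bloch--Srinivas decomposition of the diagonal. Since the condition $\CH_0(\bar{X})\otimes_{\Z}\Q\cong\Q$ is a birational invariant of smooth proper varieties, we may and do assume that $\bar{X}$ is a smooth \emph{projective} compactification of $X$; then $X$ is an open subvariety of $\bar{X}$. The plan has three steps: (i) extract from the hypothesis a decomposition of $[\Delta_{\bar{X}}]$ together with a resolution $g\colon\tilde{D}\to\bar{X}$ of the auxiliary divisor; (ii) deduce from it, using the projection formula, both that $H^1(\bar{X},\C)=0$ and that $H^3_M(\bar{X},\Q(2))=H^1_M(\bar{X},\Q(1))\cdot H^2_M(\bar{X},\Q(1))$, i.e.\ that $\bar{X}$ carries no indecomposable classes rationally; (iii) conclude $\BH(\eta,2)$ by Proposition~\ref{proposition-ex-seq-general} and then $\BH(X,2)$ by Proposition~\ref{proposition-reduction-to-generic-point}.

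For step (i): since $\CH_0(\bar{X})\otimes\Q\cong\Q$, the Bloch--Srinivas decomposition of the diagonal gives, in $\CH^{\dim\bar{X}}(\bar{X}\times\bar{X})\otimes\Q$, an identity $[\Delta_{\bar{X}}]=[\bar{X}\times x_0]+[Z]$ with $Z$ supported on $D\times\bar{X}$ for a reduced divisor $D\subsetneq\bar{X}$ and a closed point $x_0$; transposing, $[\Delta_{\bar{X}}]=[x_0\times\bar{X}]+[Z^t]$ with $Z^t$ supported on $\bar{X}\times D$. Choose a resolution $\tau\colon\tilde{D}\to D$ and let $g\colon\tilde{D}\to\bar{X}$ be $\tau$ followed by the closed immersion $D\hookrightarrow\bar{X}$, so that $g$ is proper and $\tilde{D}$ is smooth projective of dimension $\dim\bar{X}-1$; lift $Z^t$ to a correspondence $\tilde{Z}\in\CH^{\dim\bar{X}-1}(\bar{X}\times\tilde{D})\otimes\Q$ along $\id\times\tau$. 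Then $[Z^t]=[\Gamma_g]\circ[\tilde{Z}]$ as correspondences on $\bar{X}$, so the action of $Z^t$ on any of singular, motivic, or absolute Hodge cohomology factors as $g_\ast\circ\tilde{Z}_\ast$.

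For step (ii): acting on $H^1(\bar{X},\Q)$, the correspondence $x_0\times\bar{X}$ vanishes (cup product with the class of a point lands in $H^{2\dim\bar{X}+1}(\bar{X},\Q)=0$) and so does $\tilde{Z}_\ast$ (a correspondence of codimension $\dim\bar{X}-1$ shifts degree by $-2$, landing in $H^{-1}(\tilde{D},\Q)=0$); as $\Delta_{\bar{X}}$ acts as the identity we get $H^1(\bar{X},\Q)=0$, hence $H^1(\bar{X},\C)=0$. Acting on $H^3_M(\bar{X},\Q(2))$, the same count kills $x_0\times\bar{X}$ (cup product with the point class lies in $H^i_M(\bar{X},\Q(j))$ with $i>j+\dim\bar{X}$, a vanishing group), while $\tilde{Z}_\ast$ now lands in $H^1_M(\tilde{D},\Q(1))$; thus $\id=g_\ast\circ\tilde{Z}_\ast$ on $H^3_M(\bar{X},\Q(2))$. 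Now $H^1_M(\tilde{D},\Q(1))=\OO^\ast(\tilde{D})\otimes\Q=\bigoplus_i(\C^\ast\otimes\Q)$ over the connected components $\tilde{D}_i$ (each projective), and with $g_i=g|_{\tilde{D}_i}$ the pullback $g_i^\ast\colon H^1_M(\bar{X},\Q(1))\to H^1_M(\tilde{D}_i,\Q(1))$ is the identity $\C^\ast\otimes\Q\to\C^\ast\otimes\Q$, so the projection formula gives $g_{i\ast}(H^1_M(\tilde{D}_i,\Q(1)))=g_{i\ast}(g_i^\ast H^1_M(\bar{X},\Q(1)))=H^1_M(\bar{X},\Q(1))\cup g_{i\ast}(1)$ with $g_{i\ast}(1)\in\CH^1(\bar{X})\otimes\Q=H^2_M(\bar{X},\Q(1))$. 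Hence every $\alpha\in H^3_M(\bar{X},\Q(2))$ equals $g_\ast\tilde{Z}_\ast(\alpha)$, which lies in $H^1_M(\bar{X},\Q(1))\cdot H^2_M(\bar{X},\Q(1))$.

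For step (iii): by step (ii) the indecomposable group $H^3_M(\bar{X},\Q(2))/(H^1_M(\bar{X},\Q(1))\cdot H^2_M(\bar{X},\Q(1)))$ vanishes, so the cycle map from it to absolute Hodge cohomology is (trivially) injective, and Proposition~\ref{proposition-ex-seq-general} gives $\BH(\eta,2)$; since $H^1(\bar{X},\C)=0$, Proposition~\ref{proposition-reduction-to-generic-point} applied to $\bar{X}$ then yields $\BH(U,2)$ for every open $U\subseteq\bar{X}$, and $U=X$ gives the theorem. I expect the real work to be in steps (i)--(ii): setting up, inside the formalism of motivic cohomology, the Gysin map along the proper (non-immersive) morphism $g$, the additive action of Chow correspondences, and the compatibility of the cycle class map with both --- all standard, but needing care with the Tate twists and with the identity $[Z^t]=[\Gamma_g]\circ[\tilde{Z}]$.
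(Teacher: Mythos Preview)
Your proposal is correct and follows essentially the same strategy as the paper: reduce to the generic point, show that indecomposable classes in $H^3_M(\bar{X},\Q(2))$ vanish under the hypothesis on $\CH_0$, and conclude via the exact sequence relating $\BH(\eta,2)$ to injectivity of the cycle map on indecomposables. The differences are packaging rather than substance. The paper cites \cite[Theorem~3(i)]{BS} as a black box for the vanishing $H^3_M(\bar{X},\Q(2))=H^3_M(\bar{X},\Q(2))_{\rm dec}$, whereas you spell out the diagonal-decomposition argument that underlies it; and the paper goes directly through Lemmas~\ref{lemma-stupid-reduction-genpoint}, \ref{lemma-injective-dec}, \ref{lemma-reglobalisation}, whereas you invoke Propositions~\ref{proposition-ex-seq-general} and~\ref{proposition-reduction-to-generic-point}, whose proofs in turn rest on exactly those lemmas. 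Your explicit computation with $g_*\tilde{Z}_*$ and the projection formula is a clean way to see why every class is decomposable, and has the virtue of being self-contained; the paper's route is shorter but relies on the reader knowing \cite{BS}.
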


For the proof we use a theorem of Bloch and Srinivas \cite{BS} which states
that the indecomposable cycles vanish whenever the assumptions of our 
theorem are satisfied.


\subsection*{Acknowledgements} 
It is a pleasure to thank H\'el\`ene Esnault for her strong encouragement. 
I thank Donu Arapura, Florian Ivorra and  Manish Kumar for  useful discussions.

\section{Cycle class to absolute Hodge cohomology}

\subsection{Higher Chow groups and absolute Hodge cohomology} 
Let $X$ be a smooth algebraic variety over the complex numbers. 

Absolute Hodge cohomology was introduced by Beilinson 
(\cite{Bei}, cf.~\cite[\textsection2]{JA}). Beilinson constructs for 
every complex algebraic variety $X$ an object $\underline{R\Gamma}(X,\Q)$
in the derived category of mixed Hodge structures $D^b(MHS)$ such that 
$$
H^i(\underline{R\Gamma}(X,\Q))=H^i(X,\Q) \quad \text{for all $i$.}
$$   
Absolute Hodge cohomology $H_{\mc{H}}^{\bullet}(X,\Q(\bullet))$ is defined as follows: 
$$
H_{\mc{H}}^{q}(X,\Q(p))=\Hom_{D^b(MHS)}(\Q,\underline{R\Gamma}(X,\Q)(p)[q]),
$$
for all $p,q$.  
 
The natural spectral sequence 
$$
E_2^{ij}=\Ext^{i}_{MHS}(\Q,H^{j}(X,\Q)(p))\Rightarrow H^{i+j}_{\mc{H}}(X,\Q(p)),
$$
and vanishing of $\Ext^i$ for $i>1$, induces short exact sequences
\begin{equation}\label{ssabsolute Hodge}
0\xr{} \Ext^1(\Q,H^{q-1}(X,\Q)(p)) \xr{} H^q_{\mc{H}}(X,\Q(p)) \xr{} 
\Hom(\Q,H^q(X,\Q)(p)) \xr{} 0.
\end{equation}
Note that $\Hom$s and 
$\Ext$s are taken in the category of mixed Hodge structures. 

If $X$ is smooth and proper then we have a comparison isomorphism 
with Deligne cohomology 
$$
H^q_{\mc{H}}(X,\Q(p)) \cong H^q_{\mc{D}}(X,\Q(p)),
$$
provided that $q\leq 2p$ \cite[\textsection2.7]{JA}.

\subsection{Cycle class map} 

Let $DM_{gm,\Q}$ be Voevodsky's triangulated category of motivic complexes
with rational coefficients over $\C$ (\cite{Vt},\cite{V}). Denoting by 
$Sm/\C$ the category of smooth complex algebraic varieties, there is a functor 
$$
Sm/\C \xr{} DM_{gm,\Q}, \quad X\mapsto M_{gm}(X).
$$
Motivic cohomology is defined by 
$$
H_{M}^{q}(X,\Q(p))=\Hom_{DM_{gm}}(M_{gm}(X),\Q(p)[q]),
$$
for $X$ smooth and $p\geq 0, q\in \Z$. There is a comparison isomorphism \cite{Vh} with Bloch's higher Chow groups  
$$
H_{M}^{q}(X,\Q(p))\cong \CH^{p}(X,2p-q)\otimes \Q.
$$ 
By Levine \cite{Le} and Huber (\cite{Hu1},\cite{Hu2}) we have realizations 
\begin{equation}\label{equation-realization}
r_{\mc{H}}:DM_{gm}\xr{} D^b(MHS)
\end{equation}
at disposal, such that $\underline{R\Gamma}(X,\Q)$ is the dual of $r_{\mc{H}}(M_{gm}(X))$ and  
$r_{\mc{H}}(\Q(1))=\Q(1)$. 
The realizations are triangulated $\otimes$-functors and therefore induce
cycle maps
\begin{equation}\label{cycle-map}
c_{p,2p-q}: H^{q}_M(X,\Q(p))\xr{} H^{q}_{\mc{H}}(X,\Q(p))
\end{equation}
which are compatible with the localization sequence. An explicit construction
of a cycle map by using currents is presented in \cite{K-L-abeljacobi}.  
Composition of $c_{p,2p-q}$ with the projection from \eqref{ssabsolute Hodge} yields
\begin{equation}\label{bar-cycle-map}
\bar{c}_{p,2p-q}: H^{q}_M(X,\Q(p))\xr{} \Hom_{MHS}(\Q,H^{q}(X,\Q)(p)).
\end{equation}

\subsection{Beilinson-Hodge conjecture} 
Let $X$ be a smooth algebraic variety over $\C$, and $n\geq 0$ an integer.
\begin{conjecture}[Beilinson-Hodge conjecture]
BH(X,n): The cycle map $\bar{c}_{n,n}$ \eqref{bar-cycle-map}  is surjective.  
\end{conjecture}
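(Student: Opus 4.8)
The conjecture is known for $n=1$, so the natural strategy is induction on $n$ combined with a reduction to the generic point. The plan is as follows. First I would pass to the limit over open subsets $U\subset X$: since motivic cohomology commutes with filtered limits of smooth schemes and the cycle map is compatible with the localization sequence, the surjectivity of $\bar c_{n,n}$ is governed by the cycle map at the function field
$$
H^n_M(\C(X),\Q(n))\xr{} \varinjlim_{U\subset X}\Hom_{MHS}(\Q(-n),H^n(U,\Q)).
$$
Here the source is Milnor $K$-theory $K^M_n(\C(X))\otimes\Q$, so every motivic class is a sum of symbols $\{f_1,\dots,f_n\}$, and $\bar c_{n,n}$ sends such a symbol to the Hodge class of $\frac{1}{(2\pi i)^n}\frac{df_1}{f_1}\wedge\cdots\wedge\frac{df_n}{f_n}$. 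In this way $\BH(\eta,n)$ becomes the assertion that every Hodge class of type $(n,n)$ living in $H^n(U,\Q)$ for some $U$ is represented by such a logarithmic form.

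Second I would use multiplicativity to dispose of the decomposable part. Because the realization is a triangulated $\otimes$-functor, the cycle map is a ring homomorphism, and the cup product $H^a\otimes H^b\to H^{a+b}$ is a morphism of mixed Hodge structures; hence a product $\alpha\cup\beta$ of an $a$-Hodge class and a $b$-Hodge class with $a+b=n$ and $a,b<n$ is the image of the product of any motivic lifts of $\alpha$ and $\beta$. Assuming $\BH(\cdot,m)$ for all $m<n$ and all smooth varieties (the base case $n=1$ being \cite[Proposition~2.12]{EV}), every such product lifts, and the problem reduces to the \emph{indecomposable} quotient
$$
\frac{\Hom_{MHS}(\Q(-n),H^n(U,\Q))}{\sum_{a+b=n,\,a,b\geq 1}\Hom_{MHS}(\Q(-a),H^a(U,\Q))\cup\Hom_{MHS}(\Q(-b),H^b(U,\Q))}.
$$

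The hard part is the last step, realizing the indecomposable Hodge classes, and this is precisely where the argument stops being formal. Two difficulties surface. The first is that surjectivity at $\eta$ does not automatically return surjectivity for each fixed $U$: this implication is exactly Proposition~\ref{proposition-reduction-to-generic-point}, which I expect to be able to establish only under the hypothesis $H^1(X,\C)=0$. The second and deeper difficulty is that there is no formal reason for an indecomposable Hodge class to be algebraic; for $n=2$ this amounts to the injectivity of the indecomposable cycle map of Proposition~\ref{proposition-ex-seq-general}, whose image is in general only a countable subgroup. To overcome it one must input genuine geometry: for $n=2$ and $X$ rational, the theorem of Bloch and Srinivas \cite{BS} forces the indecomposable cycles to vanish, which closes the case treated here. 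For general $n$, or for $X$ admitting no comparable decomposition of the diagonal, no such vanishing is known, and it is exactly this indecomposable part that keeps $\BH(X,n)$ open.
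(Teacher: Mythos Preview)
The statement in question is a \emph{conjecture}; the paper does not prove it and makes no claim to. What the paper establishes is the special case $n=2$ for $X$ with $\CH_0(\bar X)\otimes\Q\cong\Q$ (Theorem~\ref{main-thm}). Your text is not a proof either, and you are explicit about this in the final paragraph: you correctly isolate the indecomposable Hodge classes as the genuine obstruction and note that no mechanism is known to handle them in general. So there is no gap to identify, because there is no claimed proof --- your proposal is an accurate diagnosis of why $\BH(X,n)$ remains open.

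Where your sketch overlaps with what the paper actually does, it is essentially correct: the reduction to the generic point, the role of Proposition~\ref{proposition-reduction-to-generic-point} under $H^1=0$, and the use of Bloch--Srinivas \cite{BS} to kill indecomposable cycles in the rational case are exactly the ingredients of Theorem~\ref{main-thm}. One difference worth noting is that the paper does not argue by induction on $n$ via multiplicativity of the cycle map; for $n=2$ it instead uses the coniveau spectral sequence (Proposition~\ref{proposition-coniveau-spectral-seq}) and the exact sequence of Lemma~\ref{lemma-reglobalisation} to convert surjectivity at $\eta$ into a statement about the map $H^3_M(X,\Q(2))\to H^3_{\mc{H}}(X,\Q(2))$. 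Your inductive scheme would require $\BH(\cdot,m)$ for all $m<n$ as input, which for $n\geq 3$ already presupposes unsettled cases, so it does not by itself reduce the difficulty.
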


\begin{remark}\label{remark-n=1}
If $X$ is smooth then 
$$ 
c_{1,1}:H_M^1(X,\Q(1))\xr{} H_{\mc{H}}^1(X,\Q(1))
$$ 
is an isomorphism  \cite[Proposition~2.12]{EV}. In particular, BH(X,1) holds.
\end{remark}
 
\section{Beilinson-Hodge conjectures for the generic point}

\subsection{Coniveau spectral sequences}
The main technical tool of our paper is the coniveau spectral sequence 
for motivic and absolute Hodge cohomology. The existence and construction 
of the coniveau spectral is well-known and follows from the yoga of exact couples
as in \cite{BO}. Because we couldn't provide a reference for the case
of absolute Hodge cohomology we will explain the construction 
in this section.

\subsubsection{} In the following, $?$ will stand for $M$ or $\mc{H}$.
For $p\geq 0$ we denote by $X^{(p)}$ the set of codimension $p$ points of $X$. 
For a point $x\in X$ (not necessarily closed) we define
$$
H^{q}_{?}(x,\Q(p)):=\varinjlim_{x\in U\subset X} H^q_{?}(U,\Q(p)),
$$
where $U$ runs over all open neighborhoods of $x$. For all $n\geq 0$, the coniveau 
spectral sequence reads:
\begin{equation}\label{equation-coniveau-spectral-seq}
E_{1}^{p,q}=\bigoplus_{x\in X^{(p)}, p\leq n} H^{q-p}_{?}(x,\Q(n-p))\Rightarrow H^{p+q}_?(X,\Q(n)).
\end{equation}
The terms $E_1^{p,q}$ with $p>n$ are zero.

\subsubsection{} 
In order to construct the coniveau spectral sequence we use the 
category of finite correspondences $Cor_\C$ \cite[Lecture~1]{V}.
There is an obvious functor 
$$
Sm/\C\xr{} Cor_\C, \quad X\mapsto [X].
$$ 
We denote by $\mc{H}^b(Cor_\C)$ the homotopy category of bounded complexes \cite[\textsection2.1]{Vt}. 
By construction \cite[Definition~2.1.1]{Vt} there is a triangulated 
functor
\begin{equation}\label{equation-functor-Cor-to-DM}
\mc{H}^b(Cor_\C) \xr{} DM_{gm,\Q}.
\end{equation}

\begin{definition}
Let $X$ be smooth and $Y\subset X$ a closed set. We define $c_{Y}X$
to be the complex
$$
[X\backslash Y]\xr{\jmath} \underset{\deg=0}{[X]}
$$ 
in $\mc{H}^b(Cor_\C)$. The map $\jmath$ is the open immersion.
\end{definition}

Let $Y\subset X$ be a closed subset. For an open subset $U$ of  $X$
and a closed subset $Y'$ of  $U$ such that $Y\cap U\subset Y'$
we get a morphism of complexes
\begin{equation}\label{equation-c-maps}
c_{Y'}U\xr{} c_{Y}X.
\end{equation}

\begin{lemma}\label{lemma-distingueshed-triangles}
Let $X$ be smooth.
Let $Y_1,Y_2$ be closed subsets of $X$ with $Y_2\subset Y_1$. 
\begin{enumerate}
\item 
The morphisms 
$$
c_{Y_1\backslash Y_2}(X\backslash Y_2) \xr{} c_{Y_1}X \xr{} c_{Y_2}X \xr{+1} c_{Y_1\backslash Y_2}(X\backslash Y_2) [1]
$$
induced by \eqref{equation-c-maps} and 
\begin{align*}
c_{Y_2}X\xr{} c_{Y_1\backslash Y_2}(X\backslash Y_2)[1] \\
\xymatrix
{
[X] 
&
\\
[X\backslash Y_2]\ar[u]\ar[r]^{-{\rm id}}
&
[X\backslash Y_2]
\\
&
[X\backslash Y_1]\ar[u]^{-{\rm incl}}
}
\end{align*}
form a distinguished triangle in $\mc{H}^b(Cor_\C)$.
\item If $Y_2'\subset Y'_1$ are closed subsets of $X$ such that $Y_i\subset Y'_i$, for $i=1,2$, then the morphisms from \eqref{equation-c-maps} induce
a morphism of distinguished triangles 
$$
\xymatrix 
{
c_{Y_1\backslash Y_2}(X\backslash Y_2) \ar[r] & 
c_{Y_1}X \ar[r] & 
c_{Y_2}X \ar[r]^{+1}&
\\
c_{Y'_1\backslash Y'_2}(X\backslash Y'_2) \ar[r]\ar[u] & 
c_{Y'_1}X \ar[r]\ar[u] & 
c_{Y'_2}X \ar[r]^{+1}\ar[u]
&
}
$$ 
\end{enumerate}
\begin{proof}
For (1). 
equivalent to $0$. There is an obvious isomorphism in $\mc{H}^b(Cor_\C):$
\begin{align*}
{\rm cone}(c_{Y_1}X \xr{} c_{Y_2}X)[-1]\xr{} c_{Y_1\backslash Y_2}(X\backslash Y_2),\\
\xymatrix
{
[X]
&
&
\\
[X] \oplus [X\backslash Y_2] \ar[u]\ar[rr]^{-{\rm pr}_2}
&
&
[X\backslash Y_2]
\\
[X\backslash Y_1]\ar[u]^{({\rm incl},-{\rm incl})}\ar[rr]^{=}
&
&
[X\backslash Y_1],\ar[u]
}
\end{align*}
rendering commutative the diagram 
$$
\xymatrix 
{
c_{Y_2}X[-1]\ar[r] \ar[d]&
{\rm cone}(c_{Y_1}X \xr{} c_{Y_2}X)[-1]\ar[r] \ar[d]&
c_{Y_1}X \ar[r] \ar[d]& 
c_{Y_2}X  \ar[d]
\\
c_{Y_2}X[-1]\ar[r]&
c_{Y_1\backslash Y_2}(X\backslash Y_2) \ar[r] & 
c_{Y_1}X \ar[r] & 
c_{Y_2}X. 
}
$$

For (2). Straight-forward.
\end{proof}
\end{lemma}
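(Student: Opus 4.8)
The plan for part (1) is to realize the proposed triangle as (a rotation of) the canonical mapping-cone triangle attached to an honest morphism of complexes, so that it is distinguished by the very definition of the triangulated structure on $\mc{H}^b(Cor_\C)$. The inclusion $Y_2\subset Y_1$ gives, through \eqref{equation-c-maps}, a morphism $f\colon c_{Y_1}X\to c_{Y_2}X$ which in the two nonzero degrees is the open immersion $[X\backslash Y_1]\to[X\backslash Y_2]$ and $\id_{[X]}$. I would then write out ${\rm cone}(f)$ explicitly: it is the three-term complex $[X\backslash Y_1]\to[X]\oplus[X\backslash Y_2]\to[X]$ in degrees $-2,-1,0$, with differentials dictated by the usual cone formula.

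The heart of the matter is that ${\rm cone}(f)$ decomposes, as a complex, into $c_{Y_1\backslash Y_2}(X\backslash Y_2)[1]$ together with a complementary contractible summand of the shape $\bigl([X]\xr{\id}[X]\bigr)$: one may take the contractible piece to live in degrees $-1,0$ on $\{(a,0)\}\subset[X]\oplus[X\backslash Y_2]$ and all of $[X]$, and the other summand to sit in degree $-1$ as the image of $[X\backslash Y_2]$ under $b\mapsto(-b,b)$; that this is a subcomplex uses the compatibility of the open immersions $[X\backslash Y_1]\to[X\backslash Y_2]\to[X]$. A contractible complex is isomorphic to $0$ in $\mc{H}^b(Cor_\C)$, so the projection ${\rm cone}(f)\to c_{Y_1\backslash Y_2}(X\backslash Y_2)[1]$ is an isomorphism there. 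Transporting the canonical triangle $c_{Y_1}X\xr{f}c_{Y_2}X\to{\rm cone}(f)\to c_{Y_1}X[1]$ along this isomorphism and rotating, one gets a distinguished triangle on the objects $c_{Y_1\backslash Y_2}(X\backslash Y_2)$, $c_{Y_1}X$, $c_{Y_2}X$, and it then remains only to identify its three maps with those in the statement. The middle one is literally $f$; the other two — an instance of \eqref{equation-c-maps} and the connecting map — are obtained by composing the structure maps of the cone triangle with the identification above, and checking that they agree with the morphisms of complexes displayed in the statement is the one genuinely delicate point, being a matter of keeping track of the signs introduced by the shift and the cone differential.

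For part (2) the plan is to invoke naturality. The hypotheses $Y_i\subset Y'_i$ give the two rightmost vertical maps $c_{Y'_i}X\to c_{Y_i}X$ directly from \eqref{equation-c-maps}, and the leftmost vertical map $c_{Y'_1\backslash Y'_2}(X\backslash Y'_2)\to c_{Y_1\backslash Y_2}(X\backslash Y_2)$ from \eqref{equation-c-maps} as well, once one records the containment $(Y_1\backslash Y_2)\cap(X\backslash Y'_2)=Y_1\backslash Y'_2\subset Y'_1\backslash Y'_2$, which holds because $Y_1\subset Y'_1$. All of these maps are induced, via the functor $Sm/\C\to Cor_\C$, by genuine open immersions of varieties, and both the mapping-cone construction and the splitting used in part (1) are functorial in $f$; hence the two squares of (2) commute already at the level of complexes. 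This is the "straightforward" part. In sum, the lemma is essentially formal once the cone of $f$ has been computed, and the only real labor — hence the only place I would expect any trouble — is the sign bookkeeping in part (1).
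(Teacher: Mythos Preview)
Your proposal is correct and follows essentially the same route as the paper: both compute ${\rm cone}(c_{Y_1}X\to c_{Y_2}X)$ explicitly and identify it, in $\mc{H}^b(Cor_\C)$, with $c_{Y_1\backslash Y_2}(X\backslash Y_2)[1]$, then transport the canonical cone triangle. The paper simply writes down the projection ${\rm cone}(f)[-1]\to c_{Y_1\backslash Y_2}(X\backslash Y_2)$ and calls it an ``obvious isomorphism'', whereas you justify this by exhibiting the contractible summand $[X]\xr{\id}[X]$; the two arguments are the same up to the level of detail, and part (2) is declared straightforward in both.
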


\begin{definition}
Let $X$ be smooth and $Y\subset X$ a closed subset. 
For all $n\geq 0$ and $q\in \Z$ we define 
\begin{align*}
H^q_{Y,M}(X,\Q(n))&:= \Hom_{DM_{gm}}(c_YX,\Q(n)[q])\\
H^q_{Y,\mc{H}}(X,\Q(n))&:= \Hom_{D^b(MHS)}(r_{\mc{H}}(c_YX),\Q(n)[q]).
\end{align*}
We implicitly used the functor \eqref{equation-functor-Cor-to-DM}.
\end{definition}

From \eqref{equation-c-maps} we obtain a map
$$
H^*_{Y,?}(X,\Q(n)) \xr{}  H^*_{Y',?}(U,\Q(n))
$$
if  $U$ is an open subset of $X$ and $Y\cap U\subset Y'$.

\subsubsection{} 
For $p\geq 0$ we denote by $Z^p=Z^p(X)$ the set  closed 
subsets of $X$ of codimension $\geq p$, ordered by inclusion. Let $Z^p/Z^{p+1}$ denote the ordered set of pairs 
$(Z,Z')\in Z^p\times Z^{p+1}$ such that $Z\supset Z'$, with the ordering 
$$
(Z,Z')\geq (Z_1,Z'_1) \quad \text{if $Z\supset Z_1$ and $Z'\supset Z'_1$.}
$$
We can form for all $n\geq 0$ and $p\in \Z$:
\begin{align*}
H^*_{Z^p,?}(X,\Q(n))&:=\begin{cases} \varinjlim_{Z\in Z^p} H^*_{Z,?}(X,\Q(n)) &\text{if $p\geq 0$,}\\
H^*_?(X,\Q(n)) &\text{if $p\leq 0$.}\end{cases}\\
H^*_{Z^p/Z^{p+1},?}(X,\Q(n))&:=\begin{cases} \varinjlim_{(Z,Z')\in Z^p/Z^{p+1}} 
H^*_{Z\backslash Z',?}(X\backslash Z',\Q(n)) &\text{if $p\geq 0$,}\\
0 &\text{if $p<0$.}\end{cases}
\end{align*}

In view of Lemma \ref{lemma-distingueshed-triangles}, we obtain for every 
$(Z,Z')\in Z^p/Z^{p+1}$ a long exact sequence
\begin{equation}\label{equation-long-exact-sequence-cohomology-supports}
H^*_{Z',?}(X,\Q(n))\xr{} H^*_{Z,?}(X,\Q(n)) \xr{} 
H^*_{Z\backslash Z',?}(X\backslash Z',\Q(n))\xr{+1}, 
\end{equation}
and we can take the limit to get a long exact sequence
\begin{equation}\label{equation-triangle}
H^*_{Z^{p+1},?}(X,\Q(n))\xr{} H^*_{Z^p,?}(X,\Q(n)) \xr{} 
H^*_{Z^p/Z^{p+1},?}(X,\Q(n))\xr{+1}. 
\end{equation}
This also holds for $p<0$ for trivial reasons. We form an exact couple as 
follows
\begin{align}
D&:=\bigoplus_{p\in \Z} H^*_{Z^p,?}(X,\Q(n)), \label{align-exact-couple} \\
E&:= \bigoplus_{p\geq 0} H^*_{Z^p/Z^{p+1},?}(X,\Q(n)), \nonumber
\end{align}
and the exact triangle induced by \eqref{equation-triangle}:
$$
\xymatrix
{
D \ar[rr]
&
&
D \ar[dl]
\\
&
E. \ar[ul]
&
}
$$
Setting 
\begin{equation*}
E^{p,q}_{1}:=H^{p+q}_{Z^p/Z^{p+1},?}(X,\Q(n)), 
\end{equation*}
the exact couple yields a spectral sequence
\begin{equation}\label{equation-coniveau-spectral-seq-support}
E^{p,q}_{1}\Rightarrow H^{p+q}_?(X,\Q(n)),
\end{equation}
for all $n\geq 0$, such that 
$$
E^{p,q}_{\infty}= \frac{N^pH^{p+q}_?(X,\Q(n))}{N^{p+1}H^{p+q}_?(X,\Q(n))},
$$
with 
$$
N^{p}H^{i}_?(X,\Q(n))={\rm image}(H^i_{Z^p,?}(X,\Q(n)) \xr{} H^i_{?}(X,\Q(n))).
$$

\begin{lemma} \label{lemma-purity}
Let $X$ be smooth and $n\geq 0$.
\begin{enumerate}
\item If $p\leq n$ then 
$$
H^{q+p}_{Z^p/Z^{p+1},?}(X,\Q(n))\cong \bigoplus_{x\in X^{(p)}} H^{q-p}(x,\Q(n-p)) , \quad \text{for all $q\in \Z$.}
$$
\item If $p>n$ then 
$$
H^{q}_{Z^p/Z^{p+1},?}(X,\Q(n))=0,\quad \text{for all $q\in \Z$.}
$$
\end{enumerate}
\begin{proof}
The set 
$$
S=\{(Z,Z')\in Z^p/Z^{p+1}\mid \text{$Z\backslash Z'$ is smooth of pure codimension $=p$}\}$$ 
is a cofinal subset of $Z^p/Z^{p+1}$; thus 
$$
H^{q+p}_{Z^p/Z^{p+1},?}(X,\Q(n))=\varinjlim_{(Z,Z')\in S} H^*_{Z\backslash Z',?}(X\backslash Z',\Q(n))
$$
From the Gysin triangle \cite[Proposition~3.5.4]{Vt} we obtain for all 
$(Z,Z')\in S$ a natural isomorphism 
$$
c_{Z\backslash Z'} (X\backslash Z') \cong M_{gm}(Z\backslash Z')(p)[2p],
$$
in $DM_{gm}$.

For (1). By using cancellation we obtain 
$$
H^{q+p}_{Z\backslash Z',?}(X\backslash Z',\Q(n))=H^{q-p}_{?}(Z\backslash Z',\Q(n-p))
$$
for all $(Z,Z')\in S$, and the restriction maps 
$$
H^{q-p}_{?}(Z\backslash Z',\Q(n-p)) \xr{} \bigoplus_{x\in X^{(p)}} H^{q-p}_?(x,\Q(n-p))
$$
induce the desired isomorphism.

For (2). Suppose $p>n$. We claim that 
\begin{equation}\label{equation-H*Z}
H^{*}_{Z,?}(X,\Q(n))=0 
\end{equation}
for all $Z\in Z^p(X)$. In view of the long exact sequence 
\eqref{equation-long-exact-sequence-cohomology-supports} this will prove the claim. 

By definition the vanishing of $H^{*}_{Z,?}(X,\Q(n))$ follows if the restriction map
$$
H^*_?(X,\Q(n))\xr{} H^*_?(X\backslash Z,\Q(n))
$$
is an isomorphism. Set $U:=X\backslash Z$. For $?=M$ we can use 
the comparison isomorphism with higher Chow groups. It is
sufficient to prove that the restriction induces an isomorphism 
of complexes 
\begin{equation}\label{equation-restriction-X-to-U}
Z^n(X,\bullet)\xr{\cong} Z^n(U,\bullet),
\end{equation}
where $Z^n(?,\bullet)$ denotes Bloch's cycle complex.
Since $X\backslash U$ has codimension $>n$, the map \eqref{equation-restriction-X-to-U}
is injective. For the surjectivity, let $A\in Z^n(U,m)$ be the class of an 
irreducible subvariety of $U\times \Delta^m$. By definition $A$ has codimension
$n$ and meets all faces $U\times \Delta^{i}$ properly. Let $\bar{A}$ be 
the closure of $A$ in $X\times \Delta^{m}$. Since 
$$
\bar{A}\cap (X\times \Delta^{i})\subset (A\cap (U\times \Delta^{i}))\cup ((X\backslash U)\times \Delta^{i}),
$$
and $(X\backslash U)\times \Delta^{i}$ has codimension $>n$ in 
$X\times \Delta^{i}$, we conclude that $\bar{A}\in Z^n(X,m)$.

For $?=\mc{H}$. 
In view of \eqref{ssabsolute Hodge}, we need to prove that the restriction induces 
isomorphisms 
\begin{align}
\label{align-Hom} \Hom_{MHS}(\Q(-n),H^q(X,\Q)) &\xr{\cong} \Hom_{MHS}(\Q(-n),H^q(U,\Q)), \\
\label{align-Ext} \Ext^1_{MHS}(\Q(-n),H^q(X,\Q)) &\xr{\cong} \Ext^1_{MHS}(\Q(-n),H^q(U,\Q)),
\end{align}
for all $q$. In order to prove \eqref{align-Hom} and \eqref{align-Ext} we 
use the exact sequence
\begin{multline}\label{multline-exact-seq-HS}
0\xr{} \frac{H^q_{X\backslash U}(X,\Q)}{{\rm  im}(H^{q-1}(U,\Q))} \xr{} H^q(X,\Q) \xr{} H^q(U,\Q)\xr{} \\
\ker\left(H^{q+1}_{X\backslash U}(X,\Q)\xr{} H^{q+1}(X,\Q)\right)\xr{} 0.
\end{multline}
Note that $\frac{H^q_{X\backslash U}(X,\Q)}{{\rm  im}(H^{q-1}(U,\Q))}$ 
and $\ker(H^{q+1}_{X\backslash U}(X,\Q)\xr{} H^{q+1}(X,\Q))$ are Hodge structures of weight $\geq 2p$. If $E$ is any mixed Hodge structure of weight $\geq 2p$ then 
$$
 \Hom(\Q(-n),E)=0, \quad \Ext^1(\Q(-n),E)=0,
$$
because $p>n$. Therefore \eqref{multline-exact-seq-HS} implies the statement. 
\end{proof}
\end{lemma}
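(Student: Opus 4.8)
The plan is to compute $H^{*}_{Z^{p}/Z^{p+1},?}(X,\Q(n))$ by restricting the defining colimit to the cofinal subset
$$
S=\{(Z,Z')\in Z^{p}/Z^{p+1}\mid Z\backslash Z'\text{ is smooth of pure codimension }p\text{ in }X\backslash Z'\}.
$$
For $(Z,Z')\in S$ the Gysin triangle in $DM_{gm}$ \cite[Proposition~3.5.4]{Vt} provides a natural isomorphism $c_{Z\backslash Z'}(X\backslash Z')\cong M_{gm}(Z\backslash Z')(p)[2p]$, and since $r_{\mc{H}}$ is a triangulated $\otimes$-functor with $r_{\mc{H}}(\Q(1))=\Q(1)$ the same identification holds after applying $r_{\mc{H}}$. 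For part (1), where $p\leq n$ so that $n-p\geq 0$, Voevodsky's cancellation theorem (for $?=M$) together with the compatibility of $r_{\mc{H}}$ with Tate twists (for $?=\mc{H}$) rewrites this as $H^{q+p}_{Z\backslash Z',?}(X\backslash Z',\Q(n))\cong H^{q-p}_{?}(Z\backslash Z',\Q(n-p))$, naturally in $(Z,Z')\in S$. Passing to the colimit, the generic points of the smooth codimension-$p$ loci $Z\backslash Z'$ exhaust $X^{(p)}$ and the restriction maps assemble into the asserted isomorphism with $\bigoplus_{x\in X^{(p)}}H^{q-p}_{?}(x,\Q(n-p))$.

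For part (2) assume $p>n$. By the long exact sequence \eqref{equation-long-exact-sequence-cohomology-supports} it suffices to show that $H^{*}_{Z,?}(X,\Q(n))=0$ for every $Z\in Z^{p}(X)$, equivalently that restriction to $U:=X\backslash Z$ is an isomorphism $H^{*}_{?}(X,\Q(n))\xr{\cong}H^{*}_{?}(U,\Q(n))$. For $?=M$ I would pass to Bloch's higher Chow groups and verify that restriction of cycle complexes $Z^{n}(X,\bullet)\to Z^{n}(U,\bullet)$ is an isomorphism: injectivity is clear since $X\backslash U=Z$ has codimension $>n$, and for surjectivity one sends an admissible cycle $A\in Z^{n}(U,m)$ to its Zariski closure $\bar{A}\subset X\times\Delta^{m}$, which still meets every face properly because the components of $\bar{A}\cap(X\times\Delta^{i})$ not already in $A\cap(U\times\Delta^{i})$ lie in $(X\backslash U)\times\Delta^{i}$, of codimension $>n$. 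For $?=\mc{H}$ one uses the short exact sequence \eqref{ssabsolute Hodge} to reduce the desired isomorphism to the statement that $\Hom_{MHS}(\Q(-n),-)$ and $\Ext^{1}_{MHS}(\Q(-n),-)$ do not distinguish $H^{q}(X,\Q)$ from $H^{q}(U,\Q)$.

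The heart of the matter, and the step I expect to be the main obstacle, is this last Hodge-theoretic input — it is the only place where the argument for $?=\mc{H}$ genuinely departs from the one for $?=M$. Inserting the localization sequence for singular cohomology with supports expresses the kernel and cokernel of $H^{q}(X,\Q)\to H^{q}(U,\Q)$ as subquotients of the local cohomology groups $H^{*}_{X\backslash U}(X,\Q)$. One then needs the weight estimate that these are mixed Hodge structures of weight $\geq 2p$; since $Z$ may be singular this requires a dévissage, stratifying $Z$ into smooth locally closed pieces and applying purity to each. Granting it, any mixed Hodge structure $E$ of weight $\geq 2p$ satisfies $\Hom_{MHS}(\Q(-n),E)=0=\Ext^{1}_{MHS}(\Q(-n),E)$ because $p>n$: the image of a morphism $\Q(-n)\to E$ is pure of weight $2n<2p$, hence zero, and in any extension $0\to E\to M\to \Q(-n)\to 0$ the weight-$\leq 2n$ piece $W_{2n}M$ maps isomorphically onto $\Q(-n)$ and splits the sequence. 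Feeding this back into the localization sequence yields the two $\Hom$ and $\Ext^{1}$ isomorphisms, hence $H^{*}_{Z,\mc{H}}(X,\Q(n))=0$, which completes the proof.
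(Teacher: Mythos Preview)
Your proposal is correct and follows essentially the same route as the paper's proof: the same cofinal subset $S$, the Gysin identification $c_{Z\backslash Z'}(X\backslash Z')\cong M_{gm}(Z\backslash Z')(p)[2p]$ plus cancellation for part~(1), and for part~(2) the reduction to $H^*_{Z,?}(X,\Q(n))=0$ handled via Bloch's cycle complex for $?=M$ and via the weight estimate on local cohomology for $?=\mc{H}$. Your explicit justification that $\Hom$ and $\Ext^1$ from $\Q(-n)$ into a mixed Hodge structure of weight $\geq 2p$ vanish, and your remark that the weight bound on $H^*_{X\backslash U}(X,\Q)$ for possibly singular $Z$ is obtained by stratifying into smooth pieces, simply spell out points the paper states without further comment.
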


\begin{proposition} \label{proposition-coniveau-spectral-seq}
Let $X$ be smooth and $?=M$ or $?=\mc{H}$. Let $n\geq 0$ be an integer.
\begin{enumerate}
\item  There is a  spectral sequence 
$$
E_{1,?}^{p,q}=\bigoplus_{x\in X^{(p)}, p\leq n} H^{q-p}_{?}(x,\Q(n-p))\Rightarrow H_?^{p+q}(X,\Q(n))
$$
such that 
$$
E^{p,q}_{\infty,?}= \frac{N^pH^{p+q}_?(X,\Q(n))}{N^{p+1}H^{p+q}_?(X,\Q(n))},
$$
with 
$$
N^{p}H^{*}_?(X,\Q(n))=\bigcup_{\substack{U\subset X \\ {\rm cd}(X\backslash U)\geq p}} 
\ker(H^*_?(X,\Q(n)) \xr{} H^*_{?}(U,\Q(n))),
$$
where $U$ runs over all open subsets with $\codim (X\backslash U)\geq p$.
\item The cycle map induces a morphism of spectral sequences 
$$
[E_{1,M}^{p,q}\Rightarrow H_M^{p+q}(X,\Q(n))] \xr{} [E_{1,\mc{H}}^{p,q}\Rightarrow H_{\mc{H}}^{p+q}(X,\Q(n))].
$$
\end{enumerate}
\begin{proof}
For (1). The statement  follows from the spectral sequence \eqref{equation-coniveau-spectral-seq-support}  and Lemma \ref{lemma-purity}.

For (2). The realization $r_{\mc{H}}$ \eqref{equation-realization} induces
a morphism of the exact couples \eqref{align-exact-couple}. 
\end{proof}
\end{proposition}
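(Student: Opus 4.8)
The plan is to obtain both parts by combining the exact couple \eqref{align-exact-couple} with the purity computation of Lemma \ref{lemma-purity}; essentially all of the non-formal content — the Gysin triangle in $DM_{gm}$, cancellation, and the weight argument for $?=\mc{H}$ — has already been isolated in Lemmas \ref{lemma-distingueshed-triangles} and \ref{lemma-purity}, so what is left is bookkeeping.

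For (1), I would start from the spectral sequence \eqref{equation-coniveau-spectral-seq-support} attached to the exact couple $(D,E)$ of \eqref{align-exact-couple}: it has $E_1^{p,q}=H^{p+q}_{Z^p/Z^{p+1},?}(X,\Q(n))$, converges to $H^{p+q}_?(X,\Q(n))$, and $E_\infty^{p,q}=N^pH^{p+q}_?/N^{p+1}H^{p+q}_?$ with $N^pH^i_?={\rm image}(H^i_{Z^p,?}(X,\Q(n))\to H^i_?(X,\Q(n)))$. Lemma \ref{lemma-purity}(1) rewrites $E_1^{p,q}$ for $p\leq n$ as $\bigoplus_{x\in X^{(p)}}H^{q-p}_?(x,\Q(n-p))$, and Lemma \ref{lemma-purity}(2) kills $E_1^{p,q}$ for $p>n$; this is precisely the asserted $E_1$-page. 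It then remains only to match the two descriptions of the filtration. For a closed $Z\subset X$ of codimension $\geq p$, the localization sequence — Lemma \ref{lemma-distingueshed-triangles}(1) applied with $Y_1=X$, $Y_2=Z$, noting $c_XX\cong[X]$ and $c_{X\setminus Z}(X\setminus Z)\cong[X\setminus Z]$ — is exact at $H^i_?(X,\Q(n))$, whence ${\rm image}(H^i_{Z,?}(X,\Q(n))\to H^i_?(X,\Q(n)))=\ker(H^i_?(X,\Q(n))\to H^i_?(X\setminus Z,\Q(n)))$. Since $H^*_{Z^p,?}(X,\Q(n))=\varinjlim_{Z\in Z^p(X)}H^*_{Z,?}(X,\Q(n))$ is a filtered colimit and $Z\mapsto X\setminus Z$ is an order-reversing bijection between $Z^p(X)$ and the poset of open $U\subset X$ with $\codim(X\setminus U)\geq p$, passing to the colimit yields $N^pH^*_?=\bigcup_U\ker(H^*_?(X,\Q(n))\to H^*_?(U,\Q(n)))$, as claimed.

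For (2), the point is that $r_{\mc{H}}$ of \eqref{equation-realization} is a triangulated $\otimes$-functor with $r_{\mc{H}}(\Q(1))=\Q(1)$. Applying it term by term to the complexes $c_YX$, to the structure maps \eqref{equation-c-maps}, and to the distinguished triangles of Lemma \ref{lemma-distingueshed-triangles}, transports the whole construction from $?=M$ to $?=\mc{H}$: one gets a compatible family of maps $H^*_{Y,M}(X,\Q(n))\to H^*_{Y,\mc{H}}(X,\Q(n))$, hence a map of the long exact sequences \eqref{equation-triangle}, hence a morphism of exact couples $(D_M,E_M)\to(D_{\mc{H}},E_{\mc{H}})$, hence a morphism of the associated spectral sequences \eqref{equation-coniveau-spectral-seq-support}. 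On the $E_1$-page this morphism is, via the purity isomorphisms of Lemma \ref{lemma-purity} — which come from the Gysin isomorphism $c_{Z\setminus Z'}(X\setminus Z')\cong M_{gm}(Z\setminus Z')(p)[2p]$ and cancellation, both commuting with the $\otimes$-triangulated functor $r_{\mc{H}}$ — the direct sum over $x\in X^{(p)}$ of the cycle maps $H^{q-p}_M(x,\Q(n-p))\to H^{q-p}_{\mc{H}}(x,\Q(n-p))$ obtained from \eqref{cycle-map} by passing to the colimit over open neighborhoods of $x$. This is exactly the morphism of spectral sequences demanded in (1).

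The step I expect to be the real obstacle is not any computation but the compatibility of the purity isomorphism with $r_{\mc{H}}$: one must know that the Gysin isomorphism used in Lemma \ref{lemma-purity} is natural enough in $DM_{gm}$ — with respect to the cofinal system $S\subset Z^p/Z^{p+1}$ and the transition maps among its members — that $r_{\mc{H}}$ carries it to the topological Gysin isomorphism, so that the identification of $E_1$-terms is performed by the \emph{same} geometric recipe on both sides and the induced map genuinely is the cycle class. Once this naturality is granted, everything else is forced by functoriality of the exact couple.
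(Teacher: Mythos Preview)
Your proposal is correct and follows exactly the paper's approach: for (1) you invoke the exact-couple spectral sequence \eqref{equation-coniveau-spectral-seq-support} together with Lemma~\ref{lemma-purity}, and for (2) you observe that the realization $r_{\mc{H}}$ induces a morphism of the exact couples \eqref{align-exact-couple}. The paper's own proof is a two-line sketch saying precisely this; your version simply unpacks the filtration identification and the naturality of purity, neither of which the paper spells out.
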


\subsection{$E_1$ complexes of the coniveau spectral sequence}
Let $X$ be smooth and connected, we denote by $\eta$ the generic point of $X$. 
The cycle map induces a morphism between the 
$E^{\bullet,2}_1$ complexes of the coniveau spectral sequence 
(Proposition \ref{proposition-coniveau-spectral-seq}) for $n=2$:  
\begin{equation}\label{diagram-Gersten}
\xymatrix
{
E^{\bullet,2}_{1,M}:
H^2_M(\eta,\Q(2)) \ar[r]\ar[d]
&
\oplus_{x\in X^{(1)}} H^1_M(x,\Q(1))\ar[r]\ar[d]
&
\oplus_{x\in X^{(2)}} \Q \ar[d]
\\
E^{\bullet,2}_{1,\mc{H}}:
H^2_{\mc{H}}(\eta,\Q(2)) \ar[r]\ar[d]
&
\oplus_{x\in X^{(1)}} H^1_{\mc{H}}(x,\Q(1))\ar[r]\ar[d]
&
\oplus_{x\in X^{(2)}} \Q \ar[d]
\\
\Hom(\Q,H^2(\eta,\Q)(2)) \ar[r]
&
\oplus_{x\in X^{(1)}} \Hom(\Q,H^1(x,\Q)(1))\ar[r]
&
\oplus_{x\in X^{(2)}} \Q 
}
\end{equation}
We call the complex in the first line $G_{M}(X,2)$, 
the complex in the second line $G_{\mc{H}}(X,2)$, and finally the complex in the 
third line is called $G_{HS}(X,2)$. The complex $G_{HS}(X,2)$ is induced
by $G_{\mc{H}}(X,2)$ via \eqref{ssabsolute Hodge}.

For $G_M(X,2)$ the group $H^2_M(\eta,\Q(2))$
is the component in degree $=0$, and the grading is defined similarly for
$G_{\mc{H}}(X,2)$ and $G_{HS}(X,2)$. Via Gersten-Quillen resolution we have 
\begin{equation}\label{equation-H1K2}
H^1(G_M(X,2))=H^1(X,\mc{K}_2)\otimes_{\Z} \Q,
\end{equation} 
where  $\mc{K}_2$ is Quillen's K-theory  Zariski sheaf associated to the presheaf 
$U\mapsto K_2(\OO_X(U))$. 

\begin{proposition}\label{proposition-cohomology-Gersten}
Let $X$ be smooth and connected.
\begin{itemize}
\item[(i)] There is a natural isomorphism $H^1(G_M(X,2))\xr{\cong} H^3_M(X,\Q(2)).$
\item[(ii)] There is a natural injective map 
$
H^1(G_{\mc{H}}(X,2))\xr{} H^3_{\mc{H}}(X,\Q(2)).
$
We call the image $H^3_{{\mc{H}},{\rm alg}}(X,\Q(2))$.
\item[(iii)] There is a natural  isomorphism 
$$
H^1(G_{HS}(X,2))\xr{} H^3_{\mc{H},{\rm alg}}(X,\Q(2))/\left(H^1_{\mc{H}}(\C,\Q(1))\cdot H^2_{\mc{H}}(X,\Q(1))\right).
$$
\item[(iv)] The above maps form a commutative diagram 
$$
\xymatrix
{
H^1(G_M(X,2))\ar[r]^{\cong} \ar[d] 
&
H^3_M(X,\Q(2))\are[d]^{c_{2,1}}
\\
H^1(G_{\mc{H}}(X,2))\ar[r]^{\cong} \ar[d] 
&
H^3_{{\mc{H}},{\rm alg}}(X,\Q(2))\ar[d] 
\\
H^1(G_{HS}(X,2))\ar[r]^-{\cong} 
&
H^3_{{\mc{H}},{\rm alg}}(X,\Q(2))/H^1_{{\mc{H}}}(\C,\Q(1))\cdot H^2_{{\mc{H}}}(X,\Q(1)),  
}
$$
and 
$$
c_{2,1}:H^3_M(X,\Q(2))\xr{} H^3_{{\mc{H}},{\rm alg}}(X,\Q(2))
$$
is surjective.
\end{itemize}
\end{proposition}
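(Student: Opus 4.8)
The plan is to read off all four statements from the coniveau spectral sequence of Proposition~\ref{proposition-coniveau-spectral-seq}, applied with $n=2$ (and, for part (iii), also with $n=1$), by locating the graded pieces of $H^3_?(X,\Q(2))$. For $H^3_?(X,\Q(2))$ the only possibly nonzero graded pieces $E^{p,3-p}_{\infty,?}$ occur in columns $0\le p\le 2$. The term $E^{2,1}_{1,?}=\bigoplus_{x\in X^{(2)}}H^{-1}_?(x,\Q(0))$ vanishes ($\Q(0)$ is concentrated in degree $0$ for $?=M$, and $\Hom/\Ext$ in negative degrees vanish for $?=\mc{H}$), and every differential into or out of $E^{1,2}_{2,?}$ has source or target in a column $p<0$ or $p>n$, hence is zero; so $E^{1,2}_{2,?}=E^{1,2}_{\infty,?}$. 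For $?=M$ one has moreover $E^{0,3}_{1,M}=H^3_M(\eta,\Q(2))\cong\CH^2(\C(X),1)\otimes\Q=0$, the cycle complex $z^2(\C(X),\bullet)$ being trivial in homological degrees $<2$; hence the coniveau filtration on $H^3_M(X,\Q(2))$ has the single nonzero graded piece $E^{1,2}_{\infty,M}=E^{1,2}_{2,M}=H^1(G_M(X,2))$, which is (i). For $?=\mc{H}$ the term $E^{0,3}_{1,\mc{H}}=H^3_{\mc{H}}(\eta,\Q(2))$ need not vanish, so one obtains only $H^1(G_{\mc{H}}(X,2))=E^{1,2}_{2,\mc{H}}=E^{1,2}_{\infty,\mc{H}}=N^1H^3_{\mc{H}}(X,\Q(2))\hookrightarrow H^3_{\mc{H}}(X,\Q(2))$, which is (ii) with $H^3_{\mc{H},{\rm alg}}(X,\Q(2)):=N^1H^3_{\mc{H}}(X,\Q(2))$.

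For (iii) I apply the functorial exact sequence \eqref{ssabsolute Hodge} to $G_{\mc{H}}(X,2)$ term by term. Because $H^0_{\mc{H}}(x,\Q(0))=\Q\to\Hom(\Q,H^0(x)(0))=\Q$ is the identity and $H^{-1}(x)=0$, this yields a short exact sequence of complexes $0\to K^\bullet\to G_{\mc{H}}(X,2)\to G_{HS}(X,2)\to 0$ with $K^\bullet$ concentrated in degrees $0$ and $1$: $K^0=\Ext^1(\Q,H^1(\eta)(2))$ and $K^1=\bigoplus_{x\in X^{(1)}}\Ext^1(\Q,H^0(x)(1))$. Since $H^2(K^\bullet)=0$, the long exact cohomology sequence together with (ii) gives $H^1(G_{HS}(X,2))=H^3_{\mc{H},{\rm alg}}(X,\Q(2))/{\rm im}(H^1(K^\bullet))$, so it remains to identify ${\rm im}(H^1(K^\bullet))$ with the decomposable subgroup.

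Here I use that $H^0(x)=\Q(0)$ canonically, whence $K^1=\bigoplus_{x\in X^{(1)}}\Ext^1_{MHS}(\Q,\Q(1))=\bigoplus_{x\in X^{(1)}}H^1_{\mc{H}}(\C,\Q(1))$, and that cup product with a class $\alpha\in H^1_{\mc{H}}(\C,\Q(1))$ pulled back from the point defines a chain map $\mu\colon G_{\mc{H}}(X,1)\otimes_{\Q}H^1_{\mc{H}}(\C,\Q(1))\to G_{\mc{H}}(X,2)$, where $G_{\mc{H}}(X,1)$ is the $E^{\bullet,1}_{1,\mc{H}}$-row of the coniveau spectral sequence for $n=1$. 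The image of $\mu$ lies in $K^\bullet$ — the constant classes lie in the $\Ext$-part of \eqref{ssabsolute Hodge}, by multiplicativity of that sequence and a weight/twist count — and in degree $1$ the map $\mu$ is the identity $\bigoplus_{x}H^1_{\mc{H}}(\C,\Q(1))\to\bigoplus_{x}\Ext^1(\Q,H^0(x)(1))$. A chain map of two-term complexes that is an isomorphism in top degree is onto on $H^1$, so ${\rm im}(H^1(K^\bullet)\to H^3_{\mc{H},{\rm alg}})$ equals the image of $H^1(G_{\mc{H}}(X,1))\otimes H^1_{\mc{H}}(\C,\Q(1))$. By the $n=1$ instance of the first paragraph, $H^1(G_{\mc{H}}(X,1))=N^1H^2_{\mc{H}}(X,\Q(1))$, and this equals $H^2_{\mc{H}}(X,\Q(1))$ because $H^2_{\mc{H}}(\C(X),\Q(1))=0$ (in the limit over open $U\subset X$ every Hodge class in $H^2(U)$ and every extension class in $\Ext^1(\Q,H^1(U)(1))$ comes from a divisor, and divisors become trivial on small enough $U$). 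Finally, by multiplicativity of the coniveau spectral sequence for absolute Hodge cohomology the map $H^1(G_{\mc{H}}(X,1))\otimes H^1_{\mc{H}}(\C,\Q(1))\to H^3_{\mc{H},{\rm alg}}$ is the cup product, so its image is $H^1_{\mc{H}}(\C,\Q(1))\cdot H^2_{\mc{H}}(X,\Q(1))$; this proves (iii).

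For (iv), the squares commute because the isomorphisms of (i)--(iii) are functorial for the morphism of coniveau spectral sequences (Proposition~\ref{proposition-coniveau-spectral-seq}(2)) and for the short exact sequence of complexes above. Under (i) and (ii), surjectivity of $c_{2,1}\colon H^3_M(X,\Q(2))\to H^3_{\mc{H},{\rm alg}}(X,\Q(2))$ is the surjectivity of $H^1(G_M(X,2))\to H^1(G_{\mc{H}}(X,2))$; and the cycle map of complexes $G_M(X,2)\to G_{\mc{H}}(X,2)$ is an isomorphism in every degree $\ge 1$ --- the identity on $\bigoplus_{x\in X^{(2)}}\Q$ in degree $2$, and the isomorphism $\bigoplus_{x\in X^{(1)}}H^1_M(x,\Q(1))\to\bigoplus_{x\in X^{(1)}}H^1_{\mc{H}}(x,\Q(1))$ of Remark~\ref{remark-n=1} (in the limit over smooth models) in degree $1$ --- so it is onto on $H^1$, the image of the degree-$0$ differential on the $\mc{H}$-side being at least that on the $M$-side. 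I expect the real work to be in (iii): verifying that $K^\bullet$ is a subcomplex and that multiplication by the constant classes stays inside it (this is what forces the weight/twist bookkeeping and the use of multiplicativity of \eqref{ssabsolute Hodge}), and then recognizing $H^1(\mu)$ as the cup product via multiplicativity of the coniveau spectral sequence for absolute Hodge cohomology; the auxiliary vanishing $H^2_{\mc{H}}(\C(X),\Q(1))=0$ is precisely what lets one write $H^2_{\mc{H}}(X,\Q(1))$ rather than merely its coniveau-$1$ subgroup.
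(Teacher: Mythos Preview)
Your argument is correct and, for parts (i), (ii) and (iv), essentially identical to the paper's: both read off the graded pieces of $H^3_?(X,\Q(2))$ from the coniveau spectral sequence with $n=2$, use $E^{2,1}_1=0$ and $E^{1,2}_2=E^{1,2}_\infty$, and invoke $H^3_M(\eta,\Q(2))=0$ for (i); the surjectivity in (iv) is the observation that $G_M(X,2)\to G_{\mc{H}}(X,2)$ is an isomorphism in degrees $\ge 1$.

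The only genuine difference is in (iii). The paper proceeds more directly: it first quotes the isomorphism $\mathrm{Pic}(X)\otimes\Q\cong H^2_{\mc{H}}(X,\Q(1))$, so that the decomposable subgroup $H^1_{\mc{H}}(\C,\Q(1))\cdot H^2_{\mc{H}}(X,\Q(1))\subset H^3_{\mc{H},\mathrm{alg}}(X,\Q(2))$ is visibly the image of $\bigoplus_{x\in X^{(1)}}\C^*\otimes\Q$ inside $H^1(G_{\mc{H}}(X,2))$; then the pointwise exact sequence $0\to\C^*\otimes\Q\to H^1_{\mc{H}}(x,\Q(1))\to\Hom(\Q,H^1(x)(1))\to 0$ identifies this image with $\ker\bigl(H^1(G_{\mc{H}}(X,2))\to H^1(G_{HS}(X,2))\bigr)$. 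Your route through the chain map $\mu\colon G_{\mc{H}}(X,1)\otimes H^1_{\mc{H}}(\C,\Q(1))\to K^\bullet$, the $n=1$ coniveau row, multiplicativity of the spectral sequence, and the vanishing $H^2_{\mc{H}}(\C(X),\Q(1))=0$ reaches the same conclusion and is perfectly valid, but it packages the same input ($\mathrm{Pic}\otimes\Q\cong H^2_{\mc{H}}(-,\Q(1))$) into more machinery than is needed. What you gain is a cleaner explanation of \emph{why} the cup product lands in $K^\bullet$ and why the map is surjective on $H^1$; what the paper gains is brevity, since once one knows $H^2_{\mc{H}}(X,\Q(1))$ is spanned by divisor classes the identification of the decomposable subgroup with the image of the constant subcomplex $\bigoplus_x\C^*\otimes\Q$ is immediate.
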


\begin{proof}
Statement (i) is proved in \cite{MS}.

\emph{Proof of (i) and (ii).} We use the coniveau spectral sequence 
(Proposition \ref{proposition-coniveau-spectral-seq})  
\begin{equation}
\label{equation-spectral-sequence}
E_{1}^{p,q}=\bigoplus_{x\in X^{(p)}} H^{q-p}_{?}(x,\Q(n-p)) \Rightarrow H^{p+q}_{?}(X,\Q(n)),
\end{equation}
where $?$ is $M$ or $\mc{H}$, and $0\leq p\leq n$. We have 
$$
G_{?}(X,2)=E^{\bullet,2}_1
$$
for $n=2$. 
We get $E_{2}^{1,2}=E_{\infty}^{1,2}$ and $E_{\infty}^{2,1}=0=E^{3,0}_{\infty}$
for obvious reasons. Therefore we obtain an exact sequence
\begin{equation}\label{equation-short-exact-sequence-H32}
0\xr{} E^{1,2}_{\infty}\xr{} H^3_{?}(X,\Q(2))\xr{} E^{0,3}_{\infty} \xr{} 0,
\end{equation}
with  
$$
E^{1,2}_{\infty}=\ker(H^3_{?}(X,\Q(2))\xr{} H^3_{?}(\eta,\Q(2))).
$$
For $?=M$ we have $H^3_{M}(\eta,\Q(2))=0$; for $?=\mc{H}$ we define 
$$
H^3_{{\mc{H}},{\rm alg}}(X,\Q(2)):=\ker(H^3_{\mc{H}}(X,\Q(2))\xr{} H^3_{\mc{H}}(\eta,\Q(2)))=N^1H^3_{\mc{H}}(X,\Q(2)).
$$

For (iii). 
If $X$ is smooth then it is not difficult to see that
$$
{\rm Pic}(X)\otimes \Q = H^2_M(X,\Q(1)) \cong H^2_{\mc{H}}(X,\Q(1)).
$$
It follows that via the isomorphism 
$H^1(G_{\mc{H}}(X,2))\cong H^3_{\mc{H},{\rm alg}}(X,\Q(2))$ the subgroup 
$H^1_{\mc{H}}(\C,\Q(1))\cdot H^2_{\mc{H}}(X,\Q(1))$ of $H^3_{\mc{H},{\rm alg}}(X,\Q(2))$ corresponds to the image
of $\oplus_{x\in X^{(1)}}\C^*\otimes_{\Z}\Q$ in $H^1(G_{\mc{H}}(X,2))$. 
For every point $x\in X^{(1)}$ we have an exact sequence 
$$
0\xr{} \C^*\otimes_{\Z} \Q \xr{} H^1_{\mc{H}}(x,\Q(1))\xr{} \Hom_{MHS}(\Q,H^1(x,\Q)(1))\xr{} 0,
$$
and therefore 
$$
\ker(H^1(G_{\mc{H}}(X,2))\xr{} H^1(G_{HS}(X,2)))={\rm im}(\oplus_{x\in X^{(1)}}\C^*\otimes_{\Z}\Q \xr{} H^1(G_{\mc{H}}(X,2))).
$$
This implies the claim.

Statement (iv) is obvious.
\end{proof}

\begin{remark}
For a smooth projective variety $X$ we know that 
$$
H^1(G_{HS}(X,2))\xr{\cong} H^3_{{\mc{H}},{\rm alg}}(X,\Q(2))/\left(H^1_{\mc{H}}(\C,\Q(1))\cdot H^2_{\mc{H}}(X,\Q(1))\right)
$$
is a countable group \cite{MS}.  
This is a consequence of the fact that 
deformations $a'$ of a class $a\in H^3_M(X,\Q(2))$ have the same image via 
$c_{2,1}$ modulo the group $H^1_{\mc{H}}(\C,\Q(1))\cdot H^2_{\mc{H}}(X,\Q(1))$.
There exist examples of K3-surfaces $X$ such that $H^1(G_{HS}(X,2))\neq 0$ \cite{MS}. 
\end{remark}

\begin{definition}\label{definition-decomposable}
Let $X$ be smooth, connected and projective. 
We denote by 
$$
{\rm image}(H^1_M(\C,\Q(1))\cdot H^2_M(X,\Q(1)))=:H^3_M(X,\Q(2))_{{\rm dec}}\subset H^3_M(X,\Q(2)) 
$$ 
the subgroup of decomposable cycles. In the same way we define $H^3_{\mc{H}}(X,\Q(2))_{{\rm dec}}$. 
\end{definition}

Note that 
\begin{align} \label{align-shouldbealemma1}
\C^*\otimes_{\Z}\Q&=H^1_M(\C,\Q(1))\cong H^1_{\mc{H}}(\C,\Q(1)), \\ 
{\rm Pic(X)}\otimes_{\Z}\Q&=H^2_M(X,\Q(1)))\cong H^2_{\mc{H}}(X,\Q(1))). \label{align-shouldbealemma2}
\end{align}

\begin{lemma}\label{lemma-injective-dec}
If $X$ is smooth, projective, and $H^1(X)=0$, then the maps 
\begin{align*}
H^1_{M}(\C,\Q(1))\otimes_{\Q} H^2_{M}(X,\Q(1)))&\xr{} H^3_M(X,\Q(2))\\
H^1_{\mc{H}}(\C,\Q(1))\otimes_{\Q} H^2_{\mc{H}}(X,\Q(1)))&\xr{} H^3_{\mc{H}}(X,\Q(2))
\end{align*}
are injective. In particular, 
$$
H^3_M(X,\Q(2))_{{\rm dec}} \xr{} H^3_{\mc{H}}(X,\Q(2))_{{\rm dec}}
$$
is an isomorphism.
\end{lemma}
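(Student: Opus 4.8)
The plan is to settle the statement first for absolute Hodge cohomology, where it reduces to a short computation with mixed Hodge structures, and then to transfer it to motivic cohomology via the cycle map, using that the cycle maps are multiplicative (as $r_{\mc{H}}$ is a $\otimes$-functor) and that $c_{1,1}$ is an isomorphism.

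For the Hodge case I would argue as follows. Since $X$ is smooth and projective, $H^2(X,\Q)$ is pure of weight $2$, so $H^2(X,\Q)(1)$ is a polarizable, hence semisimple, Hodge structure of weight $0$; I write its $\Q(0)$-isotypic decomposition as $H^2(X,\Q)(1)=(V\otimes_\Q\Q(0))\oplus W$, where $V:=\Hom_{MHS}(\Q,H^2(X,\Q)(1))$ is a finite-dimensional $\Q$-vector space, $V\otimes_\Q\Q(0)$ is the image of the evaluation map, and $W$ admits no nonzero morphism from $\Q(0)$. Because $H^1(X)=0$, the exact sequence \eqref{ssabsolute Hodge} gives $H^2_{\mc{H}}(X,\Q(1))=V$, and because $H^1(\Spec\C,\Q)=0$ it gives $H^1_{\mc{H}}(\C,\Q(1))=\Ext^1_{MHS}(\Q,\Q(1))$. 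For $\lambda\in H^1_{\mc{H}}(\C,\Q(1))$ and $D\in H^2_{\mc{H}}(X,\Q(1))$, the image of $\lambda$ in $\Hom(\Q,H^1(\Spec\C,\Q)(1))=0$ vanishes, so multiplicativity of the projection in \eqref{ssabsolute Hodge} forces the product $\lambda\cdot D$ into the subgroup $\Ext^1_{MHS}(\Q,H^2(X,\Q)(2))\subset H^3_{\mc{H}}(X,\Q(2))$, where unravelling the product structure shows $\lambda\cdot D=(D\otimes\Q(1))_*\lambda$, the pushforward of $\lambda$ along $D\otimes\Q(1)\colon\Q(1)\to H^2(X,\Q)(2)$. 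Twisting the decomposition to $H^2(X,\Q)(2)=(V\otimes\Q(1))\oplus W(1)$, the morphism $D\otimes\Q(1)$ factors through $V\otimes\Q(1)$ as $v\mapsto D\otimes v$, and finite-dimensionality of $V$ gives $\Ext^1_{MHS}(\Q,V\otimes\Q(1))=V\otimes_\Q\Ext^1_{MHS}(\Q,\Q(1))$; hence the product map is identified with the canonical isomorphism $\Ext^1_{MHS}(\Q,\Q(1))\otimes_\Q V\xr{\cong}V\otimes_\Q\Ext^1_{MHS}(\Q,\Q(1))$ followed by the inclusion of a direct summand $\Ext^1_{MHS}(\Q,V\otimes\Q(1))\hookrightarrow\Ext^1_{MHS}(\Q,H^2(X,\Q)(2))\hookrightarrow H^3_{\mc{H}}(X,\Q(2))$. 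In particular it is injective, which is the Hodge case.

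For the motivic case I would invoke the commutative square provided by the cycle map: its top row is the motivic product map $H^1_M(\C,\Q(1))\otimes_\Q H^2_M(X,\Q(1))\to H^3_M(X,\Q(2))$, its bottom row is the Hodge product map just treated, and its left column is the tensor product of the cycle maps $H^1_M(\C,\Q(1))\to H^1_{\mc{H}}(\C,\Q(1))$ and $H^2_M(X,\Q(1))\to H^2_{\mc{H}}(X,\Q(1))$, which is an isomorphism by \eqref{align-shouldbealemma1} and \eqref{align-shouldbealemma2}. Injectivity of the bottom row then forces injectivity of the top row. The last assertion is then formal: by Definition \ref{definition-decomposable} the groups $H^3_M(X,\Q(2))_{{\rm dec}}$ and $H^3_{\mc{H}}(X,\Q(2))_{{\rm dec}}$ are the images of these two (now injective) product maps, hence each is isomorphic to its source; and the cycle map sends the one source isomorphically onto the other, so $H^3_M(X,\Q(2))_{{\rm dec}}\to H^3_{\mc{H}}(X,\Q(2))_{{\rm dec}}$ is an isomorphism. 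The step I expect to be the crux is the identification of the Hodge product map with the pushforward along $D\otimes\Q(1)$ together with the check that it lands in the $\Ext^1$-summand: that is where the hypothesis $H^1(X)=0$ (so that $H^2_{\mc{H}}(X,\Q(1))$ consists of pure Hodge classes and the $\Hom$-contribution to $H^1_{\mc{H}}(\C,\Q(1))$ vanishes) and the properness of $X$ (for the semisimplicity of $H^2(X,\Q)$) are used; the rest is bookkeeping.
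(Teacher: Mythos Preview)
Your proof is correct and follows essentially the same approach as the paper: reduce to absolute Hodge cohomology via the cycle map and the isomorphisms \eqref{align-shouldbealemma1}--\eqref{align-shouldbealemma2}, then use polarizability of the pure weight-$2$ Hodge structure $H^2(X,\Q)$ to split off the Hodge-class summand and identify the product map with an isomorphism onto the corresponding $\Ext^1$-summand of $H^3_{\mc{H}}(X,\Q(2))$. Your write-up is more explicit than the paper's about why the product lands in $\Ext^1$ and how the pushforward identification works, but the underlying argument is the same.
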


\begin{proof}
By using the cycle map it is sufficient to prove the statement 
for absolute Hodge cohomology. The assumption $H^1(X)=0$ implies 
$$
H^2_{\mc{H}}(X,\Q(1))) \cong  \Hom(\Q(-1),H^2(X,\Q)).
$$ 
The pure Hodge structure $H^2(X,\Q)$ is polarizable and therefore  
$$
\Hom(\Q(-1),H^2(X,\Q))\otimes \Q(-1)\subset H^2(X,\Q)
$$
is a direct summand which we call  ${\rm Hg}^{1,1}$. We get 
\begin{multline*}
({\rm Hg}^{1,1}\otimes_{\Q} \C)/(2\pi i)^2\cdot {\rm Hg}_{\Q}^{1,1}=\Ext^1(\Q(-2),{\rm Hg}^{1,1}) \\ \subset \Ext^1(\Q(-2),H^2(X,\Q)) \subset H^3_{\mc{H}}(X,\Q(2))  
\end{multline*}
and clearly $H^1_{\mc{H}}(\C,\Q(1))\otimes_{\Q} H^2_{\mc{H}}(X,\Q(1)))$ is
mapping isomorphically onto 
$
({\rm Hg}^{1,1}\otimes_{\Q} \C)/(2\pi i)^2\cdot {\rm Hg}_{\Q}^{1,1}.
$ 
Because of \eqref{align-shouldbealemma1} and \eqref{align-shouldbealemma2} we 
conclude that 
$$
H^3_M(X,\Q(2))_{{\rm dec}} \xr{} H^3_{\mc{H}}(X,\Q(2))_{{\rm dec}}
$$
is an isomorphism.
\end{proof}

\begin{proposition}\label{proposition-Gersten-H0}
Let $X$ be smooth and connected. Restriction to the generic point 
yields the following equalities:
\begin{align}
H^2_M(X,\Q(2))&\xr{\cong} H^0(G_{M}(X,2)) \label{align-1-Gersten-H0} \\
H^2_{\mc{H}}(X,\Q(2))&\xr{\cong} H^0(G_{\mc{H}}(X,2)). \label{align-2-Gersten-H0} 
\end{align}
\begin{proof}
We use the coniveau spectral sequence (Proposition \ref{proposition-coniveau-spectral-seq}) for $n=2$. We have
$$
E^{0,2}_{2,?}=H^0(G_?(X,2))
$$
for $?=M$ and $?=\mc{H}$. Note that $E^{2,q}_{1,?}=0$ for $q\neq 2$. Thus 
$E^{0,2}_{2,?}=E^{0,2}_{\infty,?}$ and $E^{2,0}_{\infty,?}=0$. Moreover, 
$E^{1,1}_{1,?}=0$, because $H^0_?(U,\Q(1))=0$ for every $U$. It follows
that $E^{1,1}_{\infty,?}=0$ and 
$$
H^2_{?}(X,\Q(2))=E^{0,2}_{\infty,?}=E^{0,2}_{2,?}.
$$
\end{proof}
\end{proposition}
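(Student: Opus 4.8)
The plan is to read the two isomorphisms straight off the coniveau spectral sequence of Proposition \ref{proposition-coniveau-spectral-seq}, applied with $n=2$, in which --- by construction --- the complex $G_?(X,2)$ is the $E_1^{\bullet,2}$-line ($?$ standing for $M$ or $\mc{H}$ throughout). Since $H^2_?(\eta,\Q(2))$ sits in column $p=0$ and $E_1^{-1,2}=0$, there is no incoming $d_1$, so $E_2^{0,2}=\ker\bigl(d_1\colon E_1^{0,2}\to E_1^{1,2}\bigr)=H^0(G_?(X,2))$. Restriction to the generic point is precisely the edge homomorphism $H^2_?(X,\Q(2))\twoheadrightarrow E_\infty^{0,2}\hookrightarrow E_1^{0,2}=H^2_?(\eta,\Q(2))$, so it suffices to establish two facts: (a) $E_2^{0,2}=E_\infty^{0,2}$, and (b) $E_\infty^{0,2}=H^2_?(X,\Q(2))$, i.e.\ $N^1H^2_?(X,\Q(2))=0$.

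For (a) I would observe that, since the $E_1$-page is concentrated in columns $0\le p\le 2$ (columns $p>2$ vanish by Lemma \ref{lemma-purity}(2)), the only differential that can leave $E_r^{0,2}$ for $r\ge 2$ is $d_2\colon E_2^{0,2}\to E_2^{2,1}$; and $E_1^{2,1}=\bigoplus_{x\in X^{(2)}}H^{-1}_?(x,\Q(0))=0$, because the motivic complex $\Q(0)$ is concentrated in non-negative cohomological degrees while, for $?=\mc{H}$, the exact sequence \eqref{ssabsolute Hodge} together with $H^{-1}(U,\Q)=H^{-2}(U,\Q)=0$ gives $H^{-1}_{\mc{H}}(U,\Q(0))=0$. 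Hence $d_2=0$ and $E_2^{0,2}=E_\infty^{0,2}$. For (b) I would use $E_\infty^{0,2}=H^2_?(X,\Q(2))/N^1H^2_?(X,\Q(2))$ and kill $N^1$ stepwise along the coniveau filtration. First, $N^1/N^2=E_\infty^{1,1}$ vanishes already at the $E_1$-level: $E_1^{1,1}=\bigoplus_{x\in X^{(1)}}H^0_?(x,\Q(1))=0$, since $H^0_M(U,\Q(1))=\CH^1(U,2)\otimes\Q=0$ --- a statement in weight $1$, needing no Beilinson--Soul\'e input --- and $H^0_{\mc{H}}(U,\Q(1))=0$ because $\Hom_{MHS}(\Q,\Q(1))=0$ and $H^{-1}(U,\Q)=0$ in \eqref{ssabsolute Hodge}. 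Next, $N^2/N^3=E_\infty^{2,0}$ vanishes because $E_1^{2,q}=0$ for $q\ne 2$ (in particular $E_1^{2,0}=\bigoplus_{x\in X^{(2)}}H^{-2}_?(x,\Q(0))=0$). Finally, $N^pH^2_?(X,\Q(2))=0$ for $p\ge 3$, because $H^*_{Z,?}(X,\Q(2))=0$ for every closed subset $Z$ of codimension $>2$ --- this is exactly the vanishing \eqref{equation-H*Z} from the proof of Lemma \ref{lemma-purity}(2), and $N^pH^2$ is the image of the colimit of these groups. Putting the three vanishings together gives $N^1H^2_?(X,\Q(2))=0$.

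Combining (a) and (b) gives $H^2_?(X,\Q(2))=E_\infty^{0,2}=E_2^{0,2}=H^0(G_?(X,2))$ for both $?=M$ and $?=\mc{H}$, realised by restriction to $\eta$; this is \eqref{align-1-Gersten-H0} and \eqref{align-2-Gersten-H0}. I do not expect a genuine obstacle here: the argument is a purely formal unwinding of the coniveau spectral sequence constructed in the preceding subsection. The one point that calls for attention is verifying the low-degree, low-weight vanishings $H^0_?(U,\Q(1))=0$ and $H^{\le 0}_?(U,\Q(0))=0$; in the motivic case one must make sure to invoke only the known structure of motivic cohomology in weights $\le 1$, and not to appeal to the (open) Beilinson--Soul\'e conjecture.
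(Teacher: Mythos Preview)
Your proposal is correct and follows essentially the same approach as the paper: both arguments read the isomorphisms off the coniveau spectral sequence for $n=2$ by checking that $E_1^{1,1}=0$ (from $H^0_?(U,\Q(1))=0$) and $E_1^{2,q}=0$ for $q\neq 2$, so that $H^2_?(X,\Q(2))=E_\infty^{0,2}=E_2^{0,2}=H^0(G_?(X,2))$. Your write-up is simply more explicit about why the low-weight vanishings hold in each theory and about the $N^p$ for $p\ge 3$, which the paper leaves implicit in the concentration $0\le p\le 2$.
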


\begin{lemma}\label{lemma-stupid-reduction-genpoint}
Let $X$ be smooth and connected. We denote by $\eta$ the generic point of $X$. 
If 
$$
H^2_M(\eta,\Q(2))\xr{} H^2_{\mc{H}}(\eta,\Q(2))
$$
is surjective then
$$
H^2_M(X,\Q(2))\xr{} H^2_{\mc{H}}(X,\Q(2))
$$
is surjective.
\begin{proof}
We use Proposition \ref{proposition-Gersten-H0} and need to prove that 
$$
H^0(G_{M}(X,2))\xr{} H^0(G_{\mc{H}}(X,2))
$$  
is surjective.
Since $H^1_M(x,\Q(1))=H^1_{\mc{H}}(x,\Q(1))$ for every point $x\in X$ of codimension 
$=1$ this follows immediately from diagram \eqref{diagram-Gersten}.
\end{proof}
\end{lemma}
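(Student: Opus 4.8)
The plan is to reduce the statement to a diagram chase on the $E_1^{\bullet,2}$ complexes of the coniveau spectral sequence. By Proposition \ref{proposition-Gersten-H0}, restriction to the generic point identifies $H^2_M(X,\Q(2))$ with $H^0(G_M(X,2))$ and $H^2_{\mc{H}}(X,\Q(2))$ with $H^0(G_{\mc{H}}(X,2))$, and these identifications are compatible with the cycle map. So it suffices to prove that $H^0(G_M(X,2)) \to H^0(G_{\mc{H}}(X,2))$ is surjective, where by definition these groups are the kernels of the leftmost horizontal differentials $H^2_?(\eta,\Q(2)) \to \bigoplus_{x\in X^{(1)}} H^1_?(x,\Q(1))$ in diagram \eqref{diagram-Gersten}.

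Next I would note the key input: for every codimension-one point $x$ the comparison map $H^1_M(x,\Q(1)) \to H^1_{\mc{H}}(x,\Q(1))$ is an isomorphism. This is simply Remark \ref{remark-n=1} — which states that $c_{1,1}$ is an isomorphism on any smooth variety — applied to the open neighborhoods of $x$ and passed to the filtered colimit defining $H^1_?(x,\Q(1))$. Consequently, in the left-hand commutative square of \eqref{diagram-Gersten}, the vertical map $\bigoplus_{x\in X^{(1)}} H^1_M(x,\Q(1)) \to \bigoplus_{x\in X^{(1)}} H^1_{\mc{H}}(x,\Q(1))$ is an isomorphism, while the vertical map $H^2_M(\eta,\Q(2)) \to H^2_{\mc{H}}(\eta,\Q(2))$ on the left is surjective by hypothesis.

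The desired surjectivity now follows by an elementary chase in that square. Given a cocycle $\alpha \in H^2_{\mc{H}}(\eta,\Q(2))$, i.e.\ an element of $H^0(G_{\mc{H}}(X,2))$, choose $\beta \in H^2_M(\eta,\Q(2))$ mapping to $\alpha$. The image of $\beta$ under the motivic differential maps, via the right-hand vertical isomorphism, to the image of $\alpha$ under the Hodge differential, which is $0$; since that vertical map is injective, $\beta$ is itself a cocycle, hence lies in $H^0(G_M(X,2))$, and it maps to $\alpha$. Combined with the identifications of Proposition \ref{proposition-Gersten-H0}, this gives the statement.

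I do not expect any real obstacle: the entire content sits in the cited inputs, namely Proposition \ref{proposition-Gersten-H0} and the $n=1$ case of the Beilinson--Hodge conjecture (Remark \ref{remark-n=1}). The only point worth stating carefully is that the equality $H^1_M(x,\Q(1)) = H^1_{\mc{H}}(x,\Q(1))$ for a non-closed point $x$ genuinely follows from the smooth case by passing to a filtered colimit, so that the right-hand vertical arrow in \eqref{diagram-Gersten} really is an isomorphism.
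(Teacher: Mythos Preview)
Your proposal is correct and follows exactly the paper's approach: reduce via Proposition~\ref{proposition-Gersten-H0} to surjectivity of $H^0(G_M(X,2))\to H^0(G_{\mc{H}}(X,2))$, then use the isomorphism $H^1_M(x,\Q(1))\cong H^1_{\mc{H}}(x,\Q(1))$ for codimension-one points together with the commutative square in diagram~\eqref{diagram-Gersten}. You have simply made explicit the diagram chase and the justification (via Remark~\ref{remark-n=1} and passage to the colimit) that the paper leaves implicit.
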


\begin{proposition}\label{proposition-HS-Gersten-H0}
If $X$ is smooth, connected and projective then 
\begin{equation*}
H^0(G_{HS}(X,2))=0. 
\end{equation*}
\begin{proof}
Let $\eta\in X$ be the generic point.
For 
$$a\in H^0(G_{HS}(X,2))\subset \Hom(\Q(-2),H^2(\eta,\Q))$$ 
we can find an
effective divisor $D$ such that $a$ is induced by a cohomology class 
$a'\in \Hom(\Q(-2),H^2(X\backslash D,\Q))$. Let $S$ be a closed subset of 
$X$ of codimension $\geq 2$, such that $D\backslash S$ is smooth. Denoting 
$X':=X\backslash S$, we claim that $a'\mid_{X'\backslash D}$ maps to zero 
in $H^1(D\backslash S,\Q)(-1)$ via the boundary map of the localization 
sequence for singular cohomology. Indeed, the map 
$$
\Hom(\Q(-1), H^1(D\backslash S,\Q))\xr{} \bigoplus_{x\in X^{(1)}}\Hom(\Q(-1), H^1(x,\Q))
$$
is injective and therefore the claim follows from $a\in H^0(G_{HS}(X,2))$.

Now $a'\mid_{X'\backslash D}$ defines an extension of Hodge structures 
$$
0\xr{} {\rm image}(\Q(-1)^{\pi_0(D\backslash S)})\xr{} E \xr{} \Q(-2)\xr{} 0,
$$   
with $E\subset H^2(X',\Q)$.
We note that $H^2(X',\Q)=H^2(X,\Q)$ is a pure Hodge structure of weight $=2$,
and therefore the same holds for $E$. Thus the extension is trivial and 
$a'\mid_{X'\backslash D}$ lifts to $\Hom(\Q(-2),H^2(X',\Q))=0$. 
This proves that $a'\mid_{X'\backslash D}=0$ and implies $a=0$.
\end{proof}
\end{proposition}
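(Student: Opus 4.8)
The plan is to show directly that every $a\in H^0(G_{HS}(X,2))$ vanishes. By definition $a$ is an element of $\Hom_{MHS}(\Q(-2),H^2(\eta,\Q))$ that maps to zero in $\bigoplus_{x\in X^{(1)}}\Hom_{MHS}(\Q(-1),H^1(x,\Q))$ under the residue differential of $G_{HS}(X,2)$, and since $H^2(\eta,\Q)=\varinjlim_U H^2(U,\Q)$ it is induced by a class $a'\in\Hom_{MHS}(\Q(-2),H^2(X\setminus D,\Q))$ for some effective divisor $D$. Enlarging $D$ by a closed subset of codimension $\geq 2$, I may assume there is a closed $S\subset X$ with $\codim_X S\geq 2$ such that $D':=D\setminus S$ is smooth in $X':=X\setminus S$. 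Here is where projectivity enters: removing $S$ does not change cohomology in degree $\leq 2$, so $H^2(X',\Q)=H^2(X,\Q)$, a \emph{pure} Hodge structure of weight two.

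Next I would transport the residue hypothesis to the Gysin boundary of the smooth pair $(X',D')$. The localization sequence underlies an exact sequence of mixed Hodge structures
\[
H^2(X',\Q)\xr{\mathrm{res}}H^2(X'\setminus D',\Q)\xr{\partial}H^1(D',\Q)(-1),
\]
and by construction of the coniveau differential, composing $\partial$ with restriction to the generic point of a component $D'_i$ of $D'$ recovers the residue map $H^2(\eta,\Q)\to H^1(x,\Q)(-1)$ appearing in $G_{\mc{H}}(X,2)$. Since $D'$ is smooth, $H^1(D',\Q)=\bigoplus_i H^1(D'_i,\Q)$ and each summand injects into the $H^1$ of the generic point of $D'_i$ (restriction of $H^1$ of a smooth variety to a dense open subset is injective). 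Applying $\Hom_{MHS}(\Q(-1),-)$, the hypothesis $a\in H^0(G_{HS}(X,2))$ then forces $\partial\circ(a'|_{X'\setminus D'})=0$, so the sub-Hodge-structure $a'(\Q(-2))\subset H^2(X'\setminus D',\Q)$ lies in $\ker\partial=\mathrm{im}(\mathrm{res})$.

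To finish I would run a weight argument. The image of $\mathrm{res}$ is a quotient of $H^2(X',\Q)=H^2(X,\Q)$, hence is pure of weight two, while $\Q(-2)$ is pure of weight four; since $\Q(-2)$ is simple, $a'\neq 0$ would force a pure weight-four Hodge structure inside a pure weight-two one, which is absurd. Therefore $a'=0$, hence $a=0$. Equivalently, pulling $a'(\Q(-2))$ back along $\mathrm{res}$ gives an extension $0\to N\to E\to\Q(-2)\to 0$ of Hodge structures with $N=\mathrm{im}\big(H^0(D',\Q)(-1)\to H^2(X',\Q)\big)$ and $E\subset H^2(X',\Q)$ pure of weight two, which is impossible unless $a'=0$. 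I expect the only delicate step to be the middle one — identifying $\partial$ with the coniveau residue differential and reducing the vanishing of $\partial(a'|_{X'\setminus D'})$ to injectivity of $H^1$ restricted to a dense open subset; the concluding weight argument is immediate from the projectivity of $X$.
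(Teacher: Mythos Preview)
Your proposal is correct and follows essentially the same route as the paper's proof: represent $a$ by a class on $X\setminus D$, remove a codimension-$\geq 2$ set $S$ to make the divisor smooth, use injectivity of $\Hom(\Q(-1),H^1(D\setminus S,\Q))\to\bigoplus_{x\in X^{(1)}}\Hom(\Q(-1),H^1(x,\Q))$ to kill the Gysin boundary, and then finish with the weight argument coming from purity of $H^2(X',\Q)=H^2(X,\Q)$. Your direct weight comparison (a nonzero $\Q(-2)$ inside a pure weight-two structure) is just a repackaging of the paper's extension-splitting argument, and you even spell out that equivalent formulation at the end.
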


\subsection{An exact sequence for projective varieties with vanishing $H^1$}

\begin{lemma}\label{lemma-reglobalisation}
Let $X$ be smooth, projective and connected. 
Suppose that $H^1(X)=0$.
We denote by $\eta$ the 
generic point of $X$. 
There is an exact sequence
$$
H^2_M(\eta,\Q(2))\xr{}  H^2_{\mc{H}}(\eta,\Q(2)) \xr{} H^3_M(X,\Q(2)) \xr{} H^3_{{\mc{H}}}(X,\Q(2)).
$$
\begin{proof}
Via Proposition \ref{proposition-cohomology-Gersten} we identify 
$$
H^3_M(X,\Q(2))\cong H^1(G_M(X,2)), \quad H^1(G_{\mc{H}}(X,2))\subset H^3_{{\mc{H}}}(X,\Q(2))
$$
and need to show that there is an exact sequence 
$$
H^2_M(\eta,\Q(2))\xr{}  H^2_{\mc{H}}(\eta,\Q(2)) \xr{} H^1(G_M(X,2)) \xr{} H^1(G_{\mc{H}}(X,2)).
$$
We work with diagram \eqref{diagram-Gersten}. The map 
$$
H^2_{\mc{H}}(\eta,\Q(2))\xr{} H^1(G_M(X,2))
$$
is defined by using $E^{1,2}_{1,M}=E^{1,2}_{1,\mc{H}}$.
The assumptions on $X$ 
imply that $H^2_{\mc{H}}(X,\Q(2))=0$ and therefore $H^0(G_{\mc{H}}(X,2))=0$ by
Proposition \ref{proposition-Gersten-H0}. The rest of the proof
involves only diagram chasing.
\end{proof}
\end{lemma}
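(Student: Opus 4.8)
The plan is to use Proposition \ref{proposition-cohomology-Gersten}(i),(ii) to rewrite $H^3_M(X,\Q(2))=H^1(G_M(X,2))$ and to view $H^1(G_{\mc{H}}(X,2))$ as a subgroup of $H^3_{\mc{H}}(X,\Q(2))$, so that it suffices to establish an exact sequence
\begin{equation*}
H^2_M(\eta,\Q(2)) \xr{\alpha} H^2_{\mc{H}}(\eta,\Q(2)) \xr{\beta} H^1(G_M(X,2)) \xr{\gamma} H^1(G_{\mc{H}}(X,2))
\end{equation*}
entirely inside the commutative diagram \eqref{diagram-Gersten}. The key structural observation is that the complexes $G_M(X,2)$ and $G_{\mc{H}}(X,2)$ coincide in degrees $1$ and $2$: for a codimension-one point $x$ the vertical map $H^1_M(x,\Q(1))\to H^1_{\mc{H}}(x,\Q(1))$ is an isomorphism (Remark \ref{remark-n=1}), and in degree $2$ both complexes are $\oplus_{x\in X^{(2)}}\Q$ with the identity between them. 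Hence the cycle map identifies the degree-$1$ cocycle groups $Z^1(G_M(X,2))=Z^1(G_{\mc{H}}(X,2))$, and under this identification $B^1(G_M(X,2))\subseteq B^1(G_{\mc{H}}(X,2))$, since $d_{G_M}(a)$ corresponds to $d_{G_{\mc{H}}}(\alpha(a))$ by commutativity of the left-hand square.

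With these identifications in hand, $\gamma$ is simply the quotient map $Z^1/B^1(G_M)\twoheadrightarrow Z^1/B^1(G_{\mc{H}})$; it is surjective with kernel $B^1(G_{\mc{H}}(X,2))/B^1(G_M(X,2))$. I would then define $\beta$ by $\beta(b):=[\,d_{G_{\mc{H}}}(b)\,]$, noting that $d_{G_{\mc{H}}}(b)$ lies in the common middle term and, being a boundary, is automatically a degree-$1$ cocycle of $G_M(X,2)$; by construction $\beta$ surjects onto $\ker\gamma$, and $\gamma\beta=0$, $\beta\alpha=0$ are immediate, so exactness at $H^1(G_M(X,2))$ is formal. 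It remains to analyze $\ker\beta$: an element $b$ lies in $\ker\beta$ if and only if $d_{G_{\mc{H}}}(b)=d_{G_M}(a)=d_{G_{\mc{H}}}(\alpha(a))$ for some $a\in H^2_M(\eta,\Q(2))$, i.e.\ if and only if $b-\alpha(a)$ lies in $H^0(G_{\mc{H}}(X,2))=\ker(d_{G_{\mc{H}}})$. Thus $\ker\beta={\rm im}(\alpha)$ exactly when $H^0(G_{\mc{H}}(X,2))=0$.

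The final step is to verify $H^0(G_{\mc{H}}(X,2))=0$, which is the only place the hypotheses enter. By Proposition \ref{proposition-Gersten-H0} this group equals $H^2_{\mc{H}}(X,\Q(2))$, and the short exact sequence \eqref{ssabsolute Hodge} exhibits the latter as an extension of $\Hom_{MHS}(\Q,H^2(X,\Q)(2))$ by $\Ext^1_{MHS}(\Q,H^1(X,\Q)(2))$. The first term vanishes because $H^2(X,\Q)$ is pure of weight $2$ while $\Q(-2)$ has weight $4$ (this uses smoothness and projectivity of $X$), and the second vanishes because $H^1(X)=0$. I do not anticipate a genuine obstacle here: once one observes that $G_M(X,2)$ and $G_{\mc{H}}(X,2)$ agree away from degree $0$, the whole statement reduces to a short diagram chase in \eqref{diagram-Gersten}, the single substantive input being the vanishing of the ``gluing term'' $H^0(G_{\mc{H}}(X,2))$ under the stated hypotheses.
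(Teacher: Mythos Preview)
Your proposal is correct and follows the same approach as the paper: reduce via Proposition~\ref{proposition-cohomology-Gersten} to an exact sequence involving $H^1(G_M(X,2))$ and $H^1(G_{\mc{H}}(X,2))$, use the identification $E^{1,2}_{1,M}=E^{1,2}_{1,\mc{H}}$ (equivalently, that the two complexes agree in degrees $\geq 1$), and then invoke $H^0(G_{\mc{H}}(X,2))=H^2_{\mc{H}}(X,\Q(2))=0$ from Proposition~\ref{proposition-Gersten-H0} and the hypotheses. You have simply made explicit the diagram chase and the vanishing argument that the paper leaves to the reader; the only quibble is the phrase ``exactly when'' in your analysis of $\ker\beta$, where in fact only the implication $H^0(G_{\mc{H}}(X,2))=0\Rightarrow\ker\beta={\rm im}(\alpha)$ is needed (and is what you actually prove).
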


\subsection{Main theorem}

\begin{thm}\label{main-thm}
Let $X$ be smooth and connected. Let $\bar{X}$ be a smooth compactification of $X$.
We denote by $\CH_0(\bar{X})\otimes_{\Z}\Q$ the Chow group of zero cycles on $\bar{X}$. 
If $\deg:\CH_0(\bar{X})\otimes_{\Z}\Q \xr{} \Q$ is an isomorphism then $BH(X,2)$ holds.
\end{thm}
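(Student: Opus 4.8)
The plan is to deduce $BH(X,2)$ from the surjectivity of the cycle class map at the generic point, and to obtain the latter from the theorem of Bloch--Srinivas together with the material of the previous subsections. After replacing $\bar X$ by a smooth \emph{projective} compactification of $X$ — permissible because $\CH_0$ of a smooth proper complex variety is a birational invariant, so the hypothesis $\deg\colon\CH_0(\bar X)\otimes_\Z\Q\xr{\cong}\Q$ is preserved — we may assume $X$ is open and dense in a smooth projective connected variety $\bar X$ with $\CH_0(\bar X)\otimes_\Z\Q\cong\Q$. From this hypothesis I would extract two facts. First, the Albanese variety of $\bar X$ is trivial (its group of $\C$-points is a quotient of $\CH_0(\bar X)_{\deg 0}\otimes\Q=0$), whence $H^1(\bar X)=0$. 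Second, by the theorem of Bloch--Srinivas~\cite{BS} (the extremal case of the decomposition of the diagonal) the group of indecomposable cycles on $\bar X$ vanishes; in the notation of Definition~\ref{definition-decomposable} this reads
$$
H^3_M(\bar X,\Q(2))=H^3_M(\bar X,\Q(2))_{\rm dec}.
$$

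Next I would show that the cycle class map $c_{2,1}\colon H^3_M(\bar X,\Q(2))\xr{} H^3_{\mc{H}}(\bar X,\Q(2))$ is injective. By Lemma~\ref{lemma-injective-dec}, which applies since $\bar X$ is smooth projective with $H^1(\bar X)=0$, the restriction of $c_{2,1}$ to $H^3_M(\bar X,\Q(2))_{\rm dec}$ is injective (it is the isomorphism onto $H^3_{\mc{H}}(\bar X,\Q(2))_{\rm dec}$ followed by an inclusion); by the previous paragraph that subgroup is all of $H^3_M(\bar X,\Q(2))$. I would then feed this into the four-term exact sequence of Lemma~\ref{lemma-reglobalisation} (again available because $\bar X$ is smooth projective connected with $H^1(\bar X)=0$):
$$
H^2_M(\eta,\Q(2))\xr{c_{2,2}} H^2_{\mc{H}}(\eta,\Q(2))\xr{} H^3_M(\bar X,\Q(2))\xr{c_{2,1}} H^3_{\mc{H}}(\bar X,\Q(2)),
$$
where $\eta$ is the generic point of $\bar X$, i.e.\ of $X$. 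As the last arrow is injective, exactness forces the first arrow $c_{2,2}\colon H^2_M(\eta,\Q(2))\xr{} H^2_{\mc{H}}(\eta,\Q(2))$ to be surjective.

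It remains to globalize back to $X$. Applying Lemma~\ref{lemma-stupid-reduction-genpoint} to the smooth connected variety $X$ (whose generic point is the same $\eta$), the surjectivity just obtained at $\eta$ gives surjectivity of $c_{2,2}\colon H^2_M(X,\Q(2))\xr{} H^2_{\mc{H}}(X,\Q(2))$. Finally the exact sequence~\eqref{ssabsolute Hodge} with $p=q=2$ yields a surjection $H^2_{\mc{H}}(X,\Q(2))\to\Hom_{MHS}(\Q(-2),H^2(X,\Q))$; composing, $\bar c_{2,2}$ is surjective, which is exactly $BH(X,2)$.

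The genuinely hard inputs are quoted rather than reproved: the construction of the coniveau spectral sequence for absolute Hodge cohomology and the comparison behind Lemma~\ref{lemma-reglobalisation}, and the Bloch--Srinivas decomposition of the diagonal. Within the assembly above the point to watch is that the single hypothesis on $\CH_0(\bar X)$ has to be used \emph{twice}: once through Bloch--Srinivas to kill the indecomposable cycles, and once through the vanishing $H^1(\bar X)=0$, which is precisely what makes Lemma~\ref{lemma-injective-dec} and Lemma~\ref{lemma-reglobalisation} applicable. I expect the main bookkeeping obstacle to be the identification, via Proposition~\ref{proposition-cohomology-Gersten}, of the third map in the four-term sequence with $c_{2,1}$ and of its behaviour on the decomposable part, so that these two uses of the hypothesis lock together.
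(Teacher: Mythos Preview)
Your proof is correct and follows essentially the same route as the paper: reduce to the generic point via Lemma~\ref{lemma-stupid-reduction-genpoint}, pass to a smooth projective model with $H^1=0$, invoke Bloch--Srinivas to show all classes in $H^3_M(\bar X,\Q(2))$ are decomposable, and combine Lemma~\ref{lemma-injective-dec} with the exact sequence of Lemma~\ref{lemma-reglobalisation} to force surjectivity of $c_{2,2}$ at $\eta$. The only cosmetic difference is that the paper phrases the passage to a projective model as ``choose a smooth projective model $Y$ of $\eta$'' rather than adjusting $\bar X$, and cites \cite[Theorem~3(i)]{BS} via the $H^1(X,\mc{K}_2)$ formulation rather than the decomposition-of-the-diagonal language; your extra care in justifying $H^1(\bar X)=0$ through the Albanese and in noting the final composition with the surjection of~\eqref{ssabsolute Hodge} is entirely appropriate.
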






\begin{proof}
In view of Lemma \ref{lemma-stupid-reduction-genpoint} 
it is sufficient to show that 
$
\bar{c}_{2,2}:H^2_M(\eta,\Q(2))\xr{} H^2_{\mc{H}}(\eta,\Q(2))
$
is surjective, where $\eta$ is the generic point on $X$. 

Now, choose a smooth projective
model $Y$ of $\eta$. Since $\bar{X}$ and $Y$ are birational we conclude that
$\CH_0(Y)\otimes_{\Z}\Q\cong \Q$. It follows that $H^1(Y)=0$.

We claim that
$$
H^3_M(Y,\Q(2))_{{\rm dec}} = H^3_M(Y,\Q(2)).
$$
This implies the theorem by using Lemma \ref{lemma-reglobalisation}, because
$$
H^3_M(Y,\Q(2))_{{\rm dec}} \subset H^3_{\mc{H}}(Y,\Q(2))_{{\rm dec}}
$$
by Lemma \ref{lemma-injective-dec}.

In view of Proposition \ref{proposition-cohomology-Gersten} and \eqref{equation-H1K2}
it is sufficient to prove that the cokernel of 
$$ 
\C^*\otimes_{\Z} {\rm Pic(X)}\xr{} H^1(X,\mc{K}_2)
$$
is torsion. This is proved in \cite[Theorem~3(i)]{BS}.
\end{proof} 

\subsection{An exact sequence for projective varieties}

\begin{proposition}\label{proposition-ex-seq-general}
Let $X$ be smooth, projective and connected. 
We denote by $\eta$ the generic point of $X$. 
There is an exact sequence
\begin{multline*}
H^2_M(\eta,\Q(2))\xr{}  \Hom(\Q(-2),H^2(\eta,\Q)) \xr{} H^3_M(X,\Q(2))/H^3_M(X,\Q(2))_{{\rm dec}} \\ \xr{} H^3_{{\mc{H}}}(X,\Q(2))/H^3_{{\mc{H}}}(X,\Q(2))_{{\rm dec}}.
\end{multline*}
\begin{proof}
Via Proposition \ref{proposition-cohomology-Gersten} we identify 
\begin{align*}
H^1(G_M(X,2)) &\cong H^3_M(X,\Q(2)),\\
H^1(G_{HS}(X,2))&\subset H^3_{\mc{H}}(X,\Q(2))/H^3_{\mc{H}}(X,\Q(2))_{{\rm dec}}.
\end{align*}
We work with diagram \eqref{diagram-Gersten}. For the map 
$$
\Hom(\Q(-2),H^2(\eta,\Q)) \xr{} H^3_M(X,\Q(2))/H^3_M(X,\Q(2))_{{\rm dec}} 
$$
we observe that 
$$
0\xr{} \bigoplus_{x\in X^{(1)}} \C^*\otimes \Q \xr{} \bigoplus_{x\in X^{(1)}} H^1_M(x,\Q(1)) \xr{} \bigoplus_{x\in X^{(1)}} \Hom(\Q,H^1(x,\Q)(1)) \xr{} 0
$$
is exact and 
$$
H^3_M(X,\Q(2))_{{\rm dec}}={\rm im}(\bigoplus_{x\in X^{(1)}} \C^*\otimes \Q \xr{} H^3_M(X,\Q(2))).
$$
The assumptions on $X$ 
imply $H^0(G_{HS}(X,2))=0$ by
Proposition \ref{proposition-HS-Gersten-H0}. 
The rest of the proof
involves only diagram chasing.
\end{proof}
\end{proposition}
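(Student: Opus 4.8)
The plan is to transport the whole statement into the three Gersten-type complexes of diagram \eqref{diagram-Gersten} and then run a short piece of homological algebra, the only genuinely geometric ingredient being Proposition \ref{proposition-HS-Gersten-H0}. First I would record what Proposition \ref{proposition-cohomology-Gersten} says about the two outer terms. Part (i) gives $H^3_M(X,\Q(2))\cong H^1(G_M(X,2))$, and under this isomorphism $H^3_M(X,\Q(2))_{{\rm dec}}$ is precisely the image of the subgroup $\bigoplus_{x\in X^{(1)}}\C^*\otimes_{\Z}\Q$ of $G_M(X,2)^1=\bigoplus_{x\in X^{(1)}}\C(x)^*\otimes_{\Z}\Q$; this is the motivic analogue of the computation carried out for $\mc{H}$ in the proof of Proposition \ref{proposition-cohomology-Gersten}(iii), and rests on the fact that a product $\{a\}\cdot\mathrm{cl}(D)$ of a constant unit $a\in\C^*$ with the class of a divisor $D$ is represented in the Gersten complex by the family of tame symbols, i.e. by the image of the constant $a$ at the points of $\mathrm{supp}(D)$. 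On the Hodge side, Proposition \ref{proposition-cohomology-Gersten}(iii) identifies $H^1(G_{HS}(X,2))$ with $H^3_{\mc{H},{\rm alg}}(X,\Q(2))/H^3_{\mc{H}}(X,\Q(2))_{{\rm dec}}$, and since a decomposable class restricts to zero at $\eta$ (because $\mathrm{Pic}$ does, hence so does the product of a unit with a divisor class), we have $H^3_{\mc{H}}(X,\Q(2))_{{\rm dec}}\subseteq H^3_{\mc{H},{\rm alg}}(X,\Q(2))$ and therefore $H^1(G_{HS}(X,2))$ embeds into $H^3_{\mc{H}}(X,\Q(2))/H^3_{\mc{H}}(X,\Q(2))_{{\rm dec}}$. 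Finally I would note the shape of the vertical maps in \eqref{diagram-Gersten}: the cycle map $G_M(X,2)\to G_{\mc{H}}(X,2)$ is an isomorphism in degrees $1$ and $2$ (in degree $1$ because $c_{1,1}$ is an isomorphism by Remark \ref{remark-n=1}, hence $H^1_M(x,\Q(1))\cong H^1_{\mc{H}}(x,\Q(1))$ for $x\in X^{(1)}$; in degree $2$ it is the identity of $\bigoplus_{x\in X^{(2)}}\Q$), while $G_{\mc{H}}(X,2)\to G_{HS}(X,2)$ is surjective in each degree with kernel read off from \eqref{ssabsolute Hodge}. In particular the composite chain map $\phi\colon G_M(X,2)\to G_{HS}(X,2)$ is surjective in degrees $1$ and $2$, equals $\bar{c}_{2,2}$ in degree $0$, and has kernel $\bigoplus_{x\in X^{(1)}}\C^*\otimes_{\Z}\Q$ (the constants) in degree $1$ and $0$ in degree $2$.

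Next I would factor $\phi$ through its image subcomplex $I^\bullet\subseteq G_{HS}(X,2)$, producing two short exact sequences of complexes
$$0\to N^\bullet\to G_M(X,2)\to I^\bullet\to 0,\qquad 0\to I^\bullet\to G_{HS}(X,2)\to C^\bullet\to 0,$$
where $N^\bullet$ is concentrated in degrees $0$ and $1$ with $N^1=\bigoplus_{x\in X^{(1)}}\C^*\otimes_{\Z}\Q$ and $N^2=0$, and $C^\bullet$ is concentrated in degree $0$ with $C^0=\coker(\bar{c}_{2,2})$. Proposition \ref{proposition-HS-Gersten-H0} gives $H^0(G_{HS}(X,2))=0$; plugging this into the long exact sequence of the second short exact sequence yields $H^0(I^\bullet)=0$ together with a short exact sequence $0\to\coker(\bar{c}_{2,2})\to H^1(I^\bullet)\to H^1(G_{HS}(X,2))\to 0$. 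Plugging $H^0(I^\bullet)=0$ and $H^2(N^\bullet)=0$ into the long exact sequence of the first short exact sequence yields $0\to H^1(N^\bullet)\to H^1(G_M(X,2))\to H^1(I^\bullet)\to 0$; since the image of $H^1(N^\bullet)$ in $H^1(G_M(X,2))$ equals the image of $N^1=\bigoplus_{x\in X^{(1)}}\C^*\otimes_{\Z}\Q$, which by the first paragraph is $H^3_M(X,\Q(2))_{{\rm dec}}$ under Proposition \ref{proposition-cohomology-Gersten}(i), we obtain $H^1(I^\bullet)\cong H^3_M(X,\Q(2))/H^3_M(X,\Q(2))_{{\rm dec}}$.

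Splicing the two short exact sequences now gives exactly the asserted four-term exact sequence: the map $\Hom(\Q(-2),H^2(\eta,\Q))=G_{HS}(X,2)^0\to H^1(I^\bullet)=H^3_M(X,\Q(2))/H^3_M(X,\Q(2))_{{\rm dec}}$ is the connecting homomorphism, which factors through $\coker(\bar{c}_{2,2})\hookrightarrow H^1(I^\bullet)$ and therefore has kernel ${\rm im}(\bar{c}_{2,2})$; and the induced composite $H^3_M(X,\Q(2))/H^3_M(X,\Q(2))_{{\rm dec}}\cong H^1(I^\bullet)\to H^1(G_{HS}(X,2))\hookrightarrow H^3_{\mc{H}}(X,\Q(2))/H^3_{\mc{H}}(X,\Q(2))_{{\rm dec}}$ is the cycle map (compatibility is the commutative diagram of Proposition \ref{proposition-cohomology-Gersten}(iv)) with kernel precisely $\coker(\bar{c}_{2,2})$, giving exactness at the third term. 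I expect the main obstacle to be bookkeeping rather than anything deep: one must verify carefully that the decomposable subgroups on both the motivic and the absolute Hodge side really are the images of the constant-unit subgroups $\bigoplus_{x\in X^{(1)}}\C^*\otimes_{\Z}\Q$ of the degree-$1$ terms of the Gersten complexes, and must organize the two short exact sequences of complexes so that the connecting maps line up. The delicate point throughout is that in degree $0$ the map $\bar{c}_{2,2}$ is a priori neither injective nor surjective, so one cannot treat $\phi$ itself as a mono- or epimorphism of complexes and must pass through the image subcomplex $I^\bullet$ as above.
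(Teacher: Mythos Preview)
Your proposal is correct and follows essentially the same approach as the paper's proof: identify the outer terms via Proposition~\ref{proposition-cohomology-Gersten}, record that $H^3_M(X,\Q(2))_{\rm dec}$ is the image of $\bigoplus_{x\in X^{(1)}}\C^*\otimes\Q$ in $H^1(G_M(X,2))$, invoke Proposition~\ref{proposition-HS-Gersten-H0} for $H^0(G_{HS}(X,2))=0$, and then chase. The paper leaves the last step as ``diagram chasing''; your factorisation of $\phi$ through its image subcomplex $I^\bullet$ and the two resulting short exact sequences of complexes is a clean and correct way to organise that chase.
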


\begin{remark}
Proposition \ref{proposition-ex-seq-general} has also been proved by de Jeu and Lewis in \cite[Corollary~4.14]{JL}, and more generally with integral coefficients in \cite[Corollary~6.5]{JL}. 
\end{remark}

\begin{proposition}\label{proposition-reduction-to-generic-point}
Let $X$ be smooth, projective and connected. Suppose that $H^1(X,\Q)=0$.
The following statements are equivalent.
\begin{enumerate}
\item $BH(U,2)$ holds for all open subsets $U$ of $X$.
\item $BH(\eta,2)$ holds for the generic point $\eta$ of $X$.
\end{enumerate}
\begin{proof}
Only $(2)\Rightarrow (1)$ is interesting. Proposition \ref{proposition-ex-seq-general} 
implies that 
$$H^3_M(X,\Q(2))/H^3_M(X,\Q(2))_{{\rm dec}} \xr{} H^3_{{\mc{H}}}(X,\Q(2))/H^3_{{\mc{H}}}(X,\Q(2))_{{\rm dec}}$$
is injective. From Lemma \ref{lemma-injective-dec}, we see that 
$$
H^3_M(X,\Q(2))_{{\rm dec}} \xr{} H^3_{{\mc{H}}}(X,\Q(2))_{{\rm dec}}
$$ 
is an isomorphism. Thus 
$$
H^3_M(X,\Q(2)) \xr{} H^3_{{\mc{H}}}(X,\Q(2))
$$
is injective. It follows from Lemma \ref{lemma-reglobalisation} that 
$$
H^2_M(\eta,\Q(2)) \xr{} H^2_{{\mc{H}}}(\eta,\Q(2))
$$
is surjective. Lemma \ref{lemma-stupid-reduction-genpoint} applied to $U$
implies the claim.
\end{proof}
\end{proposition}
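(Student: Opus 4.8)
The plan is to dispose of the formal implication $(1)\Rightarrow(2)$ first, and then to prove $(2)\Rightarrow(1)$ by propagating surjectivity over the function field up to a global injectivity statement on $H^3$ and back down to every open set. For $(1)\Rightarrow(2)$: motivic cohomology commutes with the filtered colimit over open $U\subset X$, and so does the Hodge-theoretic target $\varinjlim_{U}\Hom_{MHS}(\Q(-2),H^2(U,\Q))$ appearing in the definition of $BH(\eta,2)$, so the cycle map $\bar c_{2,2}$ at $\eta$ is the filtered colimit of the maps $\bar c_{2,2}$ for the various $U\subset X$; a filtered colimit of surjections is surjective, so $(1)$ implies $(2)$.

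For the substantial direction $(2)\Rightarrow(1)$: the hypothesis $BH(\eta,2)$ says precisely that the first arrow
$$H^2_M(\eta,\Q(2))\xr{} \Hom(\Q(-2),H^2(\eta,\Q))$$
in the four-term exact sequence of Proposition~\ref{proposition-ex-seq-general} is surjective, whence its last arrow
$$H^3_M(X,\Q(2))/H^3_M(X,\Q(2))_{\mathrm{dec}}\xr{} H^3_{\mc{H}}(X,\Q(2))/H^3_{\mc{H}}(X,\Q(2))_{\mathrm{dec}}$$
is injective. Since $H^1(X,\Q)=0$, Lemma~\ref{lemma-injective-dec} gives that the cycle map is an isomorphism on the decomposable subgroups; comparing the short exact sequences
$$0\xr{} H^3_?(X,\Q(2))_{\mathrm{dec}} \xr{} H^3_?(X,\Q(2)) \xr{} H^3_?(X,\Q(2))/H^3_?(X,\Q(2))_{\mathrm{dec}} \xr{} 0$$
for $?=M$ and $?=\mc{H}$ and applying the four lemma, injectivity on the subobject and on the quotient forces injectivity of $H^3_M(X,\Q(2))\to H^3_{\mc{H}}(X,\Q(2))$. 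Feeding this into the four-term exact sequence of Lemma~\ref{lemma-reglobalisation} (which uses $H^1(X)=0$ in the guise of $H^2_{\mc{H}}(X,\Q(2))=0$) makes the arrow $H^2_{\mc{H}}(\eta,\Q(2))\to H^3_M(X,\Q(2))$ vanish, so $H^2_M(\eta,\Q(2))\to H^2_{\mc{H}}(\eta,\Q(2))$ is surjective. Finally, for an arbitrary open $U\subset X$ the generic point of $U$ is again $\eta$, so Lemma~\ref{lemma-stupid-reduction-genpoint} applied to $U$ yields surjectivity of $c_{2,2}:H^2_M(U,\Q(2))\to H^2_{\mc{H}}(U,\Q(2))$; composing with the always-surjective projection of \eqref{ssabsolute Hodge} gives surjectivity of $\bar c_{2,2}$ for $U$, i.e.\ $BH(U,2)$.

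The genuine content, and what I would expect to be the main obstacle if one built everything from scratch, is the "reglobalisation" packaged into Proposition~\ref{proposition-ex-seq-general} and Lemma~\ref{lemma-reglobalisation}: passing from data living only over the function field back to a statement about all of $X$ by means of the coniveau filtration on $H^3$, and it is exactly here that $H^1(X,\Q)=0$ is indispensable, forcing both $H^2_{\mc{H}}(X,\Q(2))=0$ and the injectivity of the cycle map on decomposable classes. Granting those inputs, the argument above is just a diagram chase through \eqref{diagram-Gersten} together with the elementary fact that a morphism of short exact sequences which is injective on sub and quotient is injective in the middle.
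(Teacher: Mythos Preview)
Your proof is correct and follows essentially the same route as the paper's own argument: use Proposition~\ref{proposition-ex-seq-general} together with $BH(\eta,2)$ to get injectivity on $H^3$ modulo decomposables, upgrade to injectivity on all of $H^3$ via Lemma~\ref{lemma-injective-dec}, feed this into Lemma~\ref{lemma-reglobalisation} to obtain surjectivity of $H^2_M(\eta,\Q(2))\to H^2_{\mc H}(\eta,\Q(2))$, and finish with Lemma~\ref{lemma-stupid-reduction-genpoint}. Your added detail (the filtered-colimit justification of $(1)\Rightarrow(2)$, the explicit four-lemma step, and the remark on where $H^1(X)=0$ enters) is all accurate and only makes the argument more explicit.
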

 
\begin{question}
Suppose $X$ is smooth, connected, projective, but $H^1(X)\neq 0$.
We denote by $\eta$ the generic point of $X$.
What is the relation between $BH(U,2)$, for all $U$ open in $X$, 
and the surjectivity of  
$$
H^2_M(\eta,\Q(2))\xr{} \Hom(\Q(-2),H^2(\eta,\Q))?
$$ 
\end{question}


\end{document}